\numberwithin{equation}{section}
\newtheorem{theorem}{Theorem}[section]
\newtheorem{lemma}[theorem]{Lemma}
\newtheorem{proposition}[theorem]{Proposition}
\newtheorem{rem}[theorem]{Remark}
\newcommand{\ind}{\mathbf{1}}
\newcommand{\R}{\mathbb{R}}
\newcommand{\N}{\mathbb{N}}
\renewcommand{\tilde}{\widetilde}
\newcommand{\cL}{{\ensuremath{\mathcal L}} }
\newcommand{\bP}{{\ensuremath{\mathbf P}} }
\newcommand{\bE}{{\ensuremath{\mathbf E}} }
\DeclareMathSymbol{\leqslant}{\mathalpha}{AMSa}{"36} 
\DeclareMathSymbol{\geqslant}{\mathalpha}{AMSa}{"3E} 
\DeclareMathSymbol{\eset}{\mathalpha}{AMSb}{"3F}     
\newcommand{\dd}{\,\text{\rm d}}             
\newcommand{\bbE}{{\ensuremath{\mathbb E}} }
\newcommand{\bbL}{{\ensuremath{\mathbb L}} }
\newcommand{\bbP}{{\ensuremath{\mathbb P}} }
\newcommand{\ga}{\alpha}
\newcommand{\gb}{\beta}
\newcommand{\gga}{\gamma}            
\newcommand{\gd}{\delta}
\newcommand{\gep}{\varepsilon}       
\newcommand{\gD}{\Delta}
\newcommand{\go}{\omega}
\def\captionfont@{\footnotesize}
\def\captionheadfont@{\scshape}
\long\def\@makecaption#1#2{%
  \vspace{2mm}
  \setbox\@tempboxa\vbox{\color@setgroup
    \advance\hsize-6pc\noindent
    \captionfont@\captionheadfont@#1\@xp\@ifnotempty\@xp
        {\@cdr#2\@nil}{.\captionfont@\upshape\enspace#2}%
    \unskip\kern-6pc\par
    \global\setbox\@ne\lastbox\color@endgroup}%
  \ifhbox\@ne 
    \setbox\@ne\hbox{\unhbox\@ne\unskip\unskip\unpenalty\unkern}%
  \fi
  \ifdim\wd\@tempboxa=\z@ 
    \setbox\@ne\hbox to\columnwidth{\hss\kern-6pc\box\@ne\hss}%
  \else 
    \setbox\@ne\vbox{\unvbox\@tempboxa\parskip\z@skip
        \noindent\unhbox\@ne\advance\hsize-6pc\par}%
\fi
  \ifnum\@tempcnta<64 
    \addvspace\abovecaptionskip
    \moveright 3pc\box\@ne
  \else 
    \moveright 3pc\box\@ne
    \nobreak
    \vskip\belowcaptionskip
  \fi
\relax
}
\def\writefig#1 #2 #3 {\rlap{\kern #1 truecm
\raise #2 truecm \hbox{#3}}}
\newcommand{\tf}{\textsc{f}}
\newcommand{\rev}{}
\begin{document}

\title[Hierarchical pinning model with site disorder]{Hierarchical pinning model with site disorder: Disorder is marginally relevant}
\author{Hubert Lacoin} 
\address{
  Universit{\'e} Paris Diderot and Laboratoire de Probabilit{\'e}s et Mod\`eles Al\'eatoires (CNRS U.M.R. 7599),
U.F.R.                Math\'ematiques, Case 7012 (Site Chevaleret),
                75205 Paris cedex 13, France
}

\email{lacoin\@@math.jussieu.fr}

\begin{abstract}
We study a hierarchical disordered pinning model with site disorder for which, like in the bond disordered case \cite{cf:DHV,cf:GLT}, there exists a  value of a 
parameter $b$ (which enters in the definition of the hierarchical lattice) that
separates an {\sl irrelevant disorder} regime and a {\sl relevant disorder} regime.
We show that for such a value of $b$ the critical point of the disordered system
 is different from the critical point of the annealed version of the model. The proof
goes beyond the technique used in  \cite{cf:GLT} and it takes explicitly 
advantage of the inhomogeneous character of the Green function of the model. 
\end{abstract}

\keywords{Hierarchical pinning models, Diamond lattices, Quenched disorder, Critical behavior.}

\subjclass[2000]{60K37, 82B44, 37H99}

\maketitle
 \section{The model and the results}

\subsection{A quick survey and some motivations}

A lot of progress have been made recently in the understanding of pinning models (see \cite{cf:Book} for a  survey and particularly Chapter $1$ for a definition of the simplest random walk based model) in particular in comparing quenched and annealed critical point (see \cite{cf:Ken, cf:KZ,cf:DGLT, cf:T_cmp,  cf:T_fractmom}).
\rev{But these works do not settle the question of whether the quenched and annealed critical points
coincide or not for the simple random walk based model}. Moreover, physicists predictions on such an issue do not agree (see for example \cite{cf:DHV, cf:FLNO}). The reason why the question is not settled 
 for the random walk based model lies in the exponent ($3/2$) of the 
 law of the first return to zero: for smaller (respectively larger) values of the exponent
 a heuristic argument ({\sl Harris criterion}, \cite{cf:DHV, cf:FLNO}), 
 made rigorous in \cite{cf:Ken, cf:KZ, cf:DGLT, cf:T_cmp}, tells 
 us that annealed and quenched critical points coincide at high
 temperature (respectively, they differ at all temperatures). The first scenario
 goes under the name of irrelevant disorder regime and the second as 
 relevant disorder regime. The irrelevant disorder regime is also characterized
 by the fact that quenched and annealed critical exponents coincide
 \cite{cf:Ken, cf:T_cmp}, while for the relevant disorder they are different
 \cite{cf:GT_cmp}. The terminology {\sl relevant/irrelevant} comes from
 the renormalization group arguments \cite{cf:DHV, cf:FLNO} leading to the Harris criterion
 and in such a context the {\sl undecided} case is called {\sl marginal}
 and, in general, it poses a very challenging problem even at a heuristic level 
 (see {\sl e.g.} \cite{cf:DG} and references therein).  

Much work has been done on the statistical mechanics on 
a particular class of hierarchical lattices, the {\sl diamond lattices}, 
because of the explicit form of the renormalization group
transformations on such lattices, while often retaining a clear link
with the corresponding non hierarchical lattices \cite{cf:DG}.
A hierarchical model for disordered pinning has been explicitly 
considered in  \cite{cf:DHV} and a rigorous analysis of this model 
has been taken up in  \cite{cf:GLT}, but such a rigorous analysis cannot confirm the
prediction in  \cite{cf:DHV}   that the disorder is relevant also in the marginal 
regime (more precisely, in \cite{cf:DHV} it is claimed that annealed and quenched
critical points differ at marginality).
It should however be pointed out that the model in \cite{cf:DHV,cf:GLT}
is a {\sl bond} disorder model and there is a natural companion to such a model,
that is the one in which the disorder is on the sites.  A priori there is no
particular reason to choose either of the two cases, but, if we take a closer look, the site disorder case is somewhat closer to the non hierarchical
case. The reason is that the Green function (see below) of the
bond model is constant through the lattice, while the Green function of the site model is not
 (this analogy can be pushed further, see Remark \ref{rem:Green}). 
In this paper we analyze the hierarchical pinning model with site disorder  
  and  we  establish disorder relevance in the marginal regime. \rev{It seems unlikely that our method can be adapted in a straightforward way to settle the question
for the bond model or for the random walk based model, but one can use it to improve
the result in \cite{cf:DGLT} (we will come back to this point in Remark \ref{remrev} below). This result is a confirmation (although the setup we consider here is slightly different) to the claim made in \cite{cf:DHV} that disorder is relevant at any temperature when the specific heat exponent vanishes.}

\subsection{The model}

Let $(D_n)_{n\in\N}$ be the sequence of lattices defined as follow
\begin{itemize}
	\item $D_0$ is made of one single edge linking two points $A$ and $B$.
	\item $D_{n+1}$ is obtained for $D_n$ by replacing each edge by $b$ branches of $s$ edges (with $b$ and $s$ in $\left\{2,3,4,\dots,\right\}$).
\end{itemize}
On $D_n$ we fix one directed path $\sigma $ linking $A$ and $B$ ({\sl the wall}).

Given $\gb>0$, $h\in \R$ and $\{\go_i\}_{i\in\N}$ a sequence of i.i.d.\ random variables (\rev{with law $\bbP$ and expectation denoted by $\bbE$}) with zero mean, unit variance, and satisfying
\begin{equation}
M(\gb):=\bbE\left[\exp(\gb\go_1)\right]<\infty\quad \text{ for every } \gb>0,
\end{equation} 
one defines the partition function of the system of rank $n$ by
\begin{equation}
\label{eq:part}
R_n\, =\, R_n(\gb,h)\, :=\, \bE_n\left[\exp(H_{n,\go,\gb,h}(S)\right], 
\end{equation}
 where  $\bP_n$ is the uniform probability on all directed path $S=(S_i)_{0 \le i\le s^n}$ on $D_n$ linking $A$ to $B$, \rev{and $\bE_n$ the related expectation}, and 
 \begin{equation}
 H_{n,\go,\gb,h}(S)\, =\, \sum_{i=1}^{s^n-1} \left[\gb\go_i+h-\log M(\gb)\right]\ind_{\{S_i=\sigma_i\}} .
\end{equation}

\begin{figure}[h]
\begin{center}
\leavevmode
\epsfysize =6.5 cm
\psfragscanon
\psfrag{o1}[c]{{\small$\go_1$}}
\psfrag{o2}[c]{{\small$\go_2$}}
\psfrag{o3}[c]{{\small$\go_3$}}
\psfrag{a}[c]{\normalsize A}
\psfrag{b}[c]{B}
\psfrag{l0}[c]{$D_0$}
\psfrag{l1}[c]{$D_1$}
\psfrag{l2}[c]{$D_2$}
\psfrag{traject}[l]{$S$ {\tiny (a directed path on $D_2$)}}

\epsfbox{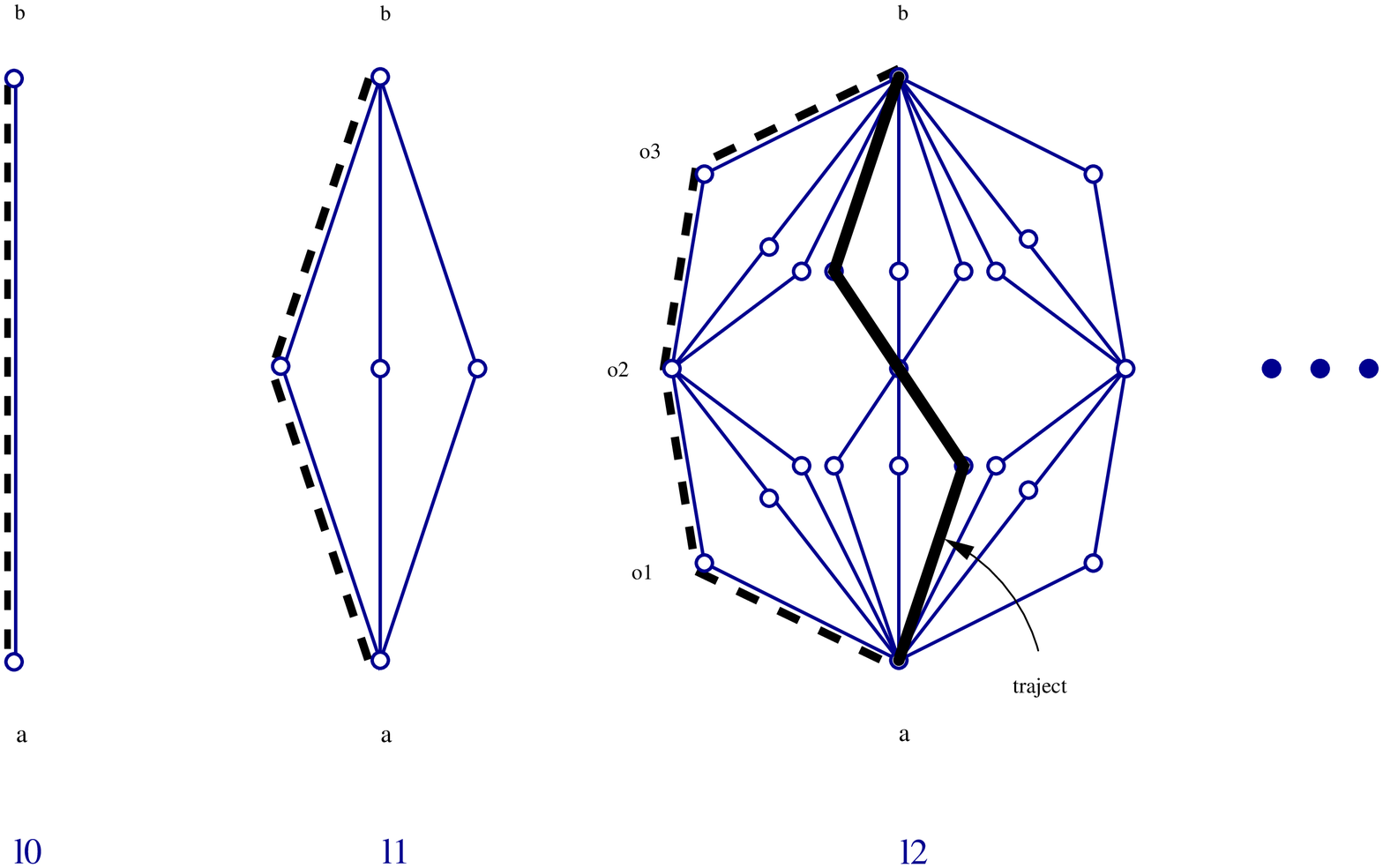}
\end{center}
\caption{\label{fig:Dn} We present here the recursive construction of the first three levels of the hierarchical lattice  $D_n$, for $b=3$, $s=2$. The geometric position of the disordered environment is specified for $D_2$. The law $\bP_n$ is the uniform law over all directed path and   the path $\sigma$
({\sl the wall}) is marked by a dashed line. In the bond disorder case \cite{cf:DHV,cf:GLT} the
hierarchy of lattices is the same, but, with reference to $D_2$, there would be four variables of disorder.}
\end{figure}

\medskip
\begin{rem}
\label{rem:Green}
\rm 
One can directly check that for this model, the site Green function of the directed random walk, that is the probability that a given site is visited by the path, is inhomogeneous (taking value $1$ for the graph extremities $A$ and $B$ and equal to $b^{-i}$ $1\le i\le n$ on the other sites of $D_n$, with $i$ corresponding to the level at which that site has appeared in the hierarchical construction of the lattice, see Figure~\ref{fig:Dn}), and this makes this model similar to the random walk based model
where the Green function decays with a power law with the length of the system
(in the hierarchical context the length of the system is $s^n$). This is not true for the bond model Green function (every bond is visited with probability $b^{-n}$). This inhomogeneity will play a crucial role in the proof, as it will allow us to improve the  method introduced in \cite{cf:GLT}. See Remark~\ref{rem:Green2} for more on this. 
\end{rem}
\medskip

The relatively cumbersome hierarchical construction actually 
boils down to a very simple
 recursion giving the law of the random variable $R_{n+1}$ in terms of the law of $R_n$, for every $n$.
For $s=2$ the recursion is particularly compact:  $R_0:=1$ and  
\begin{equation}
R_n=\frac{R_n^{(1)}A R_n^{(2)}+b-1}{b},
\end{equation}
where $R_n^{(1)}$, $R_n^{(2)}$ and $A$ are
independent random variables with $R_n^{(1)}\stackrel{\cL}=R_n^{(2)}$ and 
\begin{equation}
A \stackrel{\cL}= \exp(\gb\go_1-\log M(\gb)+h).
\end{equation}
For arbitrary $s$ the recursion gets slightly more involved:
\begin{equation}
R_{n+1}=\frac{\prod_{1\le j\le s}R_n^{(j)}\prod_{1\le j\le s-1}A_j+b-1}{b},\label{eq:rec}
\end{equation}
where, once again, all the variables appearing in the right-hand side 
are independent,  $(R_n^{(i)})_{i=1, \ldots, s}$ are also identically
distributed and  $A_j$ has the same law as $A$ for every $j$. 

The expression \eqref{eq:rec} is not only very important on a technical
level, but it
allows an important generalization of the model: there is no reason 
of choosing $b$ integer valued. We will therefore choose it real valued
($b>1$), \rev{while $s$ will always be an integer larger or equal to $2$}.

The quenched disorder $\go$ therefore enters in each step 
of the recursion: the annealed (or pure) model is given by  $r_n=\bbE[R_n]$
and it solves the recursion
\begin{equation}
r_{n+1}=\frac{\exp((s-1)h)r_n^s+(b-1)}{b}, \label{recr}
\end{equation}
with $r_0=1$. It is important to stress that $r_n=R_n^{(1)}$ if $\gb=0$
so, with our choice of the parameters, the annealed case coincides
with the infinite temperature case.
The annealed \textsl{critical point} $h_c$ is the infimum in the set of $h$ such that $r_n$ tends to infinity.

\rev{The class of systems obtained by choosing $b\in(1,s)$ in \eqref{eq:rec} is of a particular interest. With this setup, the pure system undergoes a phase transition in which the free energy critical exponent can take any value in $(1, \infty)$.
We explain below that the case $b \in [s, \infty)$ can be tackled by our methods too,
but it is less interesting for the viewpoint of the question we are addressing (i.e.\ behavior at marginality).}




\subsection{Definition and existence of the free energy}

We are interested in the study of the free energy of this pinning model. The following result ensures that it exists and states some useful technical estimates.

\medskip

\begin{proposition}
 The limit
\begin{equation}
 \lim_{n\to\infty} s^{-n}\log R_n = \tf(\gb,h),
\end{equation}
exists almost surely and it is non-random. Moreover the convergence holds also in $\bbL_1(\dd \bbP)$.
The function $\tf(\gb,\cdot)$ is non-negative, non-decreasing. 
Moreover 
there exists a constant $c$ (depending on $\gb$ and $h$, $b$ and $s$
) such that
\begin{equation}
 |s^{-n}\bbE \log R_n-\tf(\gb,h)|\le cs^{-n}.
\end{equation}
\end{proposition}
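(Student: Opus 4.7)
The plan is to first analyze the deterministic sequence $a_n := \bbE\log R_n$ via the recursion \eqref{eq:rec}, obtaining a Cauchy property for $s^{-n}a_n$ with geometric rate, and then upgrade to almost sure and $\bbL_1$ convergence by a variance bound. Writing $R_{n+1} = (X_n+b-1)/b$ with $X_n := \prod_{j=1}^s R_n^{(j)}\prod_{j=1}^{s-1} A_j$ (with all factors independent), one has
\begin{equation}
\log R_{n+1} \,=\, \log X_n - \log b + \eta_n, \qquad \eta_n := \log\!\Big(1 + \frac{b-1}{X_n}\Big).
\end{equation}
A trivial induction in the recursion yields $R_n \ge (b-1)/b$ for every $n\ge 1$, hence the pointwise lower bound $\log R_n^{(j)} \ge -\log(b/(b-1))$. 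The elementary inequality $1+(b-1)/x \le b\,\max(1,1/x)$ (checked on $x\le 1$ and $x\ge 1$) then gives $0 \le \eta_n \le \log b + (-\log X_n)_+$, and combined with the lower bound on $\log R_n^{(j)}$,
\begin{equation}
(-\log X_n)_+ \,\le\, s\log(b/(b-1)) + \sum_{j=1}^{s-1}(-\log A_j)_+.
\end{equation}
Since $\bbE(-\log A_1)_+ \le \gb\,\bbE|\go_1| + |h| + |\log M(\gb)| < \infty$, this yields $\bbE\eta_n \le C$ uniformly in $n$. Together with $\bbE\log X_n = s\,a_n + (s-1)(h-\log M(\gb))$, it follows that $a_{n+1} = s\,a_n + (s-1)(h-\log M(\gb)) - \log b + \gD_n$ with $\gD_n \in [0,C]$. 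Dividing by $s^{n+1}$, the sequence $u_n := a_n/s^n$ satisfies $|u_{n+1}-u_n| \le Ks^{-(n+1)}$ for a suitable constant $K$, so $(u_n)$ is Cauchy and converges to some $\tf(\gb,h)$ with $|u_n - \tf(\gb,h)| \le K s^{-n}/(s-1)$, giving the deterministic rate in the statement.

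For a.s.\ and $\bbL_1$ convergence I would bound the variance $V_n := \text{Var}(\log R_n)$ via the same recursion. Writing $\log R_{n+1} = F(\log R_n^{(1)},\ldots,\log R_n^{(s)}, \go_1,\ldots,\go_{s-1})$, direct differentiation of the explicit form of $F$ gives $|\partial_{x_i} F|\le 1$ and $|\partial_{y_j} F| \le \gb$. Since all the arguments of $F$ are independent, the Efron--Stein inequality in its Lipschitz form ($\text{Var}(G(\xi))\le L^2\text{Var}(\xi)$ for an $L$-Lipschitz $G$ of a single variable, summed over coordinates) yields
\begin{equation}
V_{n+1} \,\le\, sV_n + (s-1)\gb^2,
\end{equation}
and starting from $V_0 = 0$ this gives $V_n \le \gb^2 s^n$, hence $\text{Var}(s^{-n}\log R_n) \le \gb^2 s^{-n}$. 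Chebyshev combined with Borel--Cantelli then gives $s^{-n}(\log R_n - a_n)\to 0$ almost surely, and the $\bbL_2$ bound furnishes the analogous $\bbL_1$ convergence to $0$ of the same quantity. Together with the deterministic convergence of $u_n$, this is exactly the a.s.\ and $\bbL_1$ convergence of $s^{-n}\log R_n$ to $\tf(\gb,h)$.

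Nonnegativity of $\tf$ is immediate from $R_n \ge (b-1)/b$ by passing to the limit in $s^{-n}\log R_n \ge -s^{-n}\log(b/(b-1)) \to 0$. Monotonicity in $h$ is also immediate since $\partial_h R_n = \bE_n\!\big[\big(\sum_{i=1}^{s^n-1}\ind_{\{S_i=\sigma_i\}}\big)\exp H_{n,\go,\gb,h}(S)\big] \ge 0$ pointwise in $\go$, so $R_n$ is nondecreasing in $h$ for every realization of $\go$ and this is inherited by $\tf$. The main technical obstacle is the uniform-in-$n$ bound on the nonlinear error $\eta_n$ in the recursion for $a_n$; this is where the deterministic lower bound $R_n \ge (b-1)/b$ and the integrability of $\go$ are essential. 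Once $\eta_n$ is tamed, everything else is routine: the factor $s^{-n}$ absorbs the $O(1)$ perturbations of the recursion and the Efron--Stein step is automatic since the $\log$ makes the recursion Lipschitz in each coordinate.
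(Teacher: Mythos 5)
Your proof is correct, and it is of the same general type as the argument the paper points to: the paper gives no proof here, deferring to the adaptation of \cite[Theorem 1.1]{cf:GLT}, which likewise controls $\bbE\log R_n$ through the recursion (approximate super/sub-additivity up to an $O(1)$ error, absorbed by the factor $s^{-n}$) and then handles fluctuations by a concentration estimate. Your implementation is a clean self-contained variant: the floor $R_n\ge (b-1)/b$ plus the bound $\eta_n\le \log b+(-\log X_n)_+$ tames the additive $(b-1)$ term exactly as needed, and replacing a full concentration inequality by Efron--Stein, Chebyshev and Borel--Cantelli is perfectly sufficient because $\mathrm{Var}(s^{-n}\log R_n)\le \gb^2 s^{-n}$ is summable; note also that your argument only uses the marginal laws of the $R_n$, so the coupling of the levels on the common environment is irrelevant. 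Two small points should be made explicit to be complete: (i) the integrability needed to run the mean recursion and Efron--Stein, namely $\log R_n\in \bbL_2$, which follows from the lower bound $\log R_n\ge -\log(b/(b-1))$ together with either $\bbE[R_n^2]<\infty$ (finite since $M(2\gb)<\infty$) or the fact that $\log R_n$ is a coordinatewise $\gb$-Lipschitz function of the finitely many $\go_i$; (ii) your monotonicity-in-$h$ argument differentiates the Hamiltonian and thus implicitly assumes the lattice representation (integer $b$), whereas the model is defined for real $b>1$ via \eqref{eq:rec}; the fix is immediate, since each iteration of \eqref{eq:rec} is nondecreasing in the $A_j$ and in the $R_n^{(j)}$, so $R_n$ is nondecreasing in $h$ for every realization by induction. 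Neither point is a genuine gap.
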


\medskip

This result is the exact equivalent of \cite[Theorem 1.1]{cf:GLT}. Though certain modifications are needed to take into account the difference between the two models, it is straightforward to adapt the method of proof.

We set
\begin{equation}
 h_c(\gb)\, =\inf\{h \text{ such that } \tf(\gb,h)>0)\}\ge 0.
\end{equation}
The value $h_c(\gb)$ is called the \textsl{critical point of the system}, basic properties of the free energy ensure that $\tf(\gb,h)>0$ if and only if $h>h_c(\gb)$. Of course
$h_c(\gb)$ is a non analyticity point of $\tf(\gb, \cdot)$. By {\sl critical behavior of the system} we will refer to how the
free energy vanishes as $h \searrow h_c(\gb)$.

It is not hard to check that with $b\in(1,s)$, we have $h_c(0)=0$.
Indeed if $h<0$, $r_n$ converges to $r_\infty\in(0,1)$ the stable fixed point of the map $x\mapsto (\exp((s-1)h)x^s+(b-1)r_0)/{b}$;
if $h=0$, $r_n=1$ for every $n$;
if $h>0$, $r_n$ diverges to infinity in such a way that  the free energy is  positive,
 therefore $h_c(0)=0$.

Moreover 
there is an immediate comparison between annealed and quenched systems:  by Jensen inequality we have that for any $\gb$ and $h$
\begin{equation}
\bbE \log R_n \, \le\,  \log r_n, 
\end{equation}
so that $\tf(\gb,h)\le \tf(0,h)$ for any $\gb>0$ and $h_c(\gb)\ge h_c(0)=0$.

\subsection{Some results on the free energy}

We state here all the results on properties of the critical system that have been proved for the bond-disorder model in
\cite{cf:GLT} and that are still true in our framework. 
Our first result concerns the shape of the free energy curve around zero in the pure system.

\medskip
\begin{theorem}\label{th:freeen}
For every $b\in (1,s)$ 
there exists a constant $c_b$ such that for all $0\le h\le 1$,
\begin{equation}
 \frac{1}{c_b}h^{1/\alpha}\le \tf(0,h)\le c_b h^{1/\alpha}, \label{eq:anneal}
\end{equation}
where $\alpha:=(\log s-\log b)/{\log s}$.
\end{theorem}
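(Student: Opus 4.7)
The proof amounts to a careful analysis of the deterministic recursion
$$r_{n+1}\,=\,\gp_h(r_n),\qquad \gp_h(x)\,:=\,\frac{e^{(s-1)h}x^s+b-1}{b},\qquad r_0=1,$$
together with the identity $\tf(0,h)=\lim_n s^{-n}\log r_n$. At $h=0$, $x=1$ is a fixed point of $\gp_0$ with derivative $\gp_0'(1)=s/b>1$ (because $b\in(1,s)$), hence unstable; for $h>0$ there is no fixed point in a small interval $[1,1+\gep_0)$, so the orbit leaves a neighbourhood of $1$. The plan is to split the trajectory into an \emph{escape phase} near $x=1$ and an \emph{explosion phase} after leaving it, and to prove: (a) the escape time $n^*(h):=\min\{n:r_n\ge 1+u_0\}$ satisfies $n^*(h)=\log(1/h)/\log(s/b)+O(1)$ for a suitably small constant $u_0$; (b) along the explosion phase $s^{-k}\log r_{n^*(h)+k}$ converges, as $k\to\infty$, to a positive limit that is bounded above and below by universal constants. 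Combining (a) and (b) gives $\tf(0,h)\asymp s^{-n^*(h)}\asymp h^{\log s/\log(s/b)}=h^{1/\ga}$.

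For the escape phase I would Taylor-expand $\gp_h$ around $(x,h)=(1,0)$, obtaining
$$\gp_h(1+u)-1\,=\,\frac{s}{b}\,u+\frac{s-1}{b}\,h+O(u^2+uh+h^2).$$
Setting $u_n:=r_n-1$ this yields two-sided comparisons of the form
$$\frac{s}{b}u_n+c_1 h\,\le\, u_{n+1}\,\le\, \frac{s}{b}u_n(1+Cu_n)+c_2 h,$$
valid for $u_n\in[0,u_0]$ and $h\in[0,h_0]$, with constants depending only on $b,s$. The lower bound is a linear recursion and gives $u_n\ge c_1 h\,((s/b)^n-1)/((s/b)-1)$, so that $u_n$ reaches $u_0$ no later than $\log(1/h)/\log(s/b)+K_1$; for the upper bound one iterates the quasi-linear recursion while keeping the quadratic correction harmless (it is indeed dominated as long as $u_n\le u_0$), which shows symmetrically that $u_n\le u_0$ for every $n\le\log(1/h)/\log(s/b)-K_2$. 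Moreover a one-step bound ensures $u_{n^*(h)}\le U_0$ for some constant $U_0$. Thus $r_{n^*(h)}$ lies in a compact interval $[1+u_0,1+U_0]\subset(1,\infty)$ and $n^*(h)$ has the announced asymptotics.

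For the explosion phase, monotonicity of $\gp_h$ and $\gp_h(x)\ge\gp_0(x)>x$ for $x\ge 1+u_0$ make the sequence $(r_n)_{n\ge n^*(h)}$ strictly increasing, and a number of further iterations uniform in small $h$ pushes $r_n$ past any given threshold $R_0$. For $r_n\ge R_0$ the term $b-1$ in $\gp_h$ is negligible and
$$s\log r_n-\log(2b)\,\le\,\log r_{n+1}\,\le\, s\log r_n+(s-1)h+\log 2.$$
Iterating this sandwich shows that $s^{-n}\log r_n$ converges and that the limit depends continuously on the starting value, uniformly in $h$ small, so by (a) it lies in a fixed compact subinterval of $(0,\infty)$. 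Hence
$$\tf(0,h)\,=\,s^{-n^*(h)}\lim_{k\to\infty}s^{-k}\log r_{n^*(h)+k}\,\asymp\, s^{-n^*(h)}\,\asymp\, h^{1/\ga},$$
which is \eqref{eq:anneal}.

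The main technical obstacle is the uniformity in $h$ of the escape-phase analysis: one has to verify that the nonlinear Taylor correction does not spoil the effective multiplier $s/b$, so that $n^*(h)$ is pinned to $\log(1/h)/\log(s/b)$ up to an additive $O(1)$. The hypothesis $b\in(1,s)$ is used in a crucial way, since it makes the fixed point $x=1$ of $\gp_0$ unstable with the correct rate; for $b\ge s$ the whole escape mechanism disappears and the critical behaviour at $h=0$ is of a different nature, in line with the paper's comment that the regime of interest is $b\in(1,s)$.
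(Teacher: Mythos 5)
The paper does not actually prove Theorem \ref{th:freeen}: it imports it from \cite{cf:GLT}, where the analogous statement for the pure (annealed) quadratic-type recursion is established, and only records the lower linear bound \eqref{rect}--\eqref{pnge} that it needs later. Your strategy — linearize $\gp_h$ at the unstable fixed point $x=1$ (unstable precisely because $b<s$), show the escape time $n^*(h)$ equals $\log(1/h)/\log(s/b)+O(1)$, then control the super-exponential regime where $\log r_{n+1}=s\log r_n+O(1)$ so that $\tf(0,h)\asymp s^{-n^*(h)}=h^{1/\ga}$ up to constants — is exactly the standard route for such results, so in spirit you are reproducing the deferred proof rather than finding a new one. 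The decomposition, the use of $\tf(0,h)=\lim_n s^{-n}\log r_n$ (legitimate, since $R_n=r_n$ when $\gb=0$), and the explosion-phase sandwich are all sound, and restricting attention to small $h$ is harmless since $\tf(0,\cdot)$ is bounded above and below by constants on any interval $[h_0,1]$.

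There is, however, one step whose justification as written does not work: the lower bound $n^*(h)\ge \log(1/h)/\log(s/b)-K_2$. If you handle the quadratic correction by saying it is ``harmless as long as $u_n\le u_0$'', i.e.\ by iterating $u_{n+1}\le \tfrac{s}{b}(1+Cu_0)u_n+c_2h$, the effective multiplier becomes $(s/b)(1+Cu_0)$ and you only obtain $n^*(h)\ge (1-\gd(u_0))\log(1/h)/\log(s/b)$, which yields $\tf(0,h)\le h^{(1-\gd)/\ga}$ — a loss in the \emph{exponent}, not in the constant $c_b$, so the upper bound in \eqref{eq:anneal} does not follow. The missing ingredient is that the errors must be accumulated multiplicatively along the orbit and shown to form a convergent product: since $u_{i+1}\ge (s/b)u_i$ for all $i$ (this is \eqref{rect}), any time $n$ with $u_n\le u_0$ satisfies $u_i\le u_0(b/s)^{n-i}$ for $i\le n$, whence
\begin{equation*}
u_n\ \le\ c_2 h\,\Bigl(\tfrac{s}{b}\Bigr)^{n}\,\frac{b}{s-b}\,\prod_{k\ge 0}\bigl(1+Cu_0(b/s)^{k}\bigr)\ \le\ C' h\,\Bigl(\tfrac{s}{b}\Bigr)^{n},
\end{equation*}
and a self-consistency (induction) argument then pins $n^*(h)$ to $\log(1/h)/\log(s/b)$ up to an additive constant. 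This is precisely the convergent-product trick the paper itself uses in the proof of Lemma \ref{th:dddd} with $\prod_i(1+c_4p_i)$, so the fix is standard — but as stated your sketch glosses over the one point where the whole two-sided estimate could silently degrade.
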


\medskip

The second result describes the influence of the disorder for $b\neq\sqrt{s}$, i.e.\ whether or not the quenched annealed critical point coincide, and it gives an estimate for their difference. Recall that if quenched and annealed critical points differ, we say that the disorder is relevant (and irrelevant if they do not).

\medskip
\begin{theorem}\label{th:relirel}
 When $b\in(\sqrt{s},s)$, for $\gb\le \gb_0$ (depending on $b$ and $s$), we have
 $h_c(\gb)=h_c(0)=0$ and moreover, for any $\gep>0$ the exists $h_\gep$ such that for any $h\le h_\gep$
\begin{equation}
(1-\gep)\tf(0,h)\le \tf(\gb,h)\le \tf(0,h). \label{eq:simsim}
\end{equation}
When $b< \sqrt{s}$ there exists $c_{b,s}$ such that for every $\gb\le1$
\begin{equation}
 \frac{1}{c_{b,s}}\gb^{\frac{2\ga}{2\ga-1}}\le h_c(\gb)-h_c(0)\le c_{b,s}\gb^{\frac{2\ga}{2\ga-1}}.
\end{equation}
\end{theorem}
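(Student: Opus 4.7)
The plan is to separate the two regimes of Theorem~\ref{th:relirel} and in each case work with the normalised partition function $W_n := R_n/r_n$, a non-negative mean-$1$ random variable. Combining \eqref{eq:rec} and \eqref{recr} one derives a recursion for $W_n$ and hence for its second moment, and the whole analysis turns on whether the linearised slope of the recursion for $\bbE[W_n^2]$, namely $s/b^2$, lies below or above $1$.

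\emph{Irrelevant regime $b\in(\sqrt{s},s)$.} Here $s/b^2<1$ and I would show that $\bbE[W_n^2]$ stays bounded uniformly in $n$ when $\gb$ is small and $h$ close to $0$. The recursion $v_{n+1}=F_{\gb,h}(v_n)$ for $v_n:=\bbE[W_n^2]$ has at $\gb=h=0$ the fixed point $v=1$ with stable derivative $s/b^2$, and a perturbative argument in $\gb,h$ keeps us inside a small basin of attraction. Once $(W_n)$ is $\bbL^2$-bounded, Paley--Zygmund gives an event of positive $\bbP$-probability on which $W_n \geq 1/2$, hence $\log R_n\geq \log r_n - \log 2$ there, and combined with the deterministic bound $R_n\geq (b-1)/b$ this yields $\tf(\gb,h)\geq \tf(0,h)$; the reverse inequality is Jensen. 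Refining the argument so that $\bbE[W_n^2]\to 1$ as $h\searrow 0$ upgrades the prefactor to $(1-\gep)$ and gives \eqref{eq:simsim}.

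\emph{Relevant regime $b<\sqrt{s}$.} Now $s/b^2>1$ and one must replace second by fractional moments. For the lower bound on $h_c(\gb)-h_c(0)$ I would follow the fractional-moment / change-of-measure strategy of \cite{cf:GLT}: fix $\gga\in(0,1)$ close to $1$ and, using subadditivity $(x+y)^\gga\leq x^\gga+y^\gga$ in \eqref{eq:rec}, obtain
\[
\bbE[R_{n+1}^\gga]\leq b^{-\gga}\Bigl(\bbE[R_n^\gga]^s\,\bbE[A^\gga]^{s-1}+(b-1)^\gga\Bigr).
\]
Iterating the pure recursion is insufficient since the prefactor $b^{-\gga}\bbE[A^\gga]^{s-1}$ sits just above the contraction threshold; one therefore inserts, at the scale $n_0$ at which the annealed correlation length $h^{-\ga}$ matches $s^{n_0}$, a Gaussian tilt of the disorder that makes the effective prefactor strictly less than $1$, at entropy cost $O(\gb^2)$ per tilted site. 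Balancing entropy against the contractivity gained and optimising $n_0,\gga$ under the constraint $h\leq c\gb^{2\ga/(2\ga-1)}$ keeps $\bbE[R_n^\gga]$ bounded, so $\tf(\gb,h)=0$. For the matching upper bound $h_c(\gb)-h_c(0)\leq c\gb^{2\ga/(2\ga-1)}$ I would run the second-moment argument of the irrelevant case up to the truncation scale $n^\ast\simeq \log(1/\gb)/\log(s/b^2)$ at which $\bbE[W_n^2]$ first becomes of order $1$, and then combine the resulting $\bbL^2$ lower bound on $R_{n^\ast}$ with the annealed free energy estimate $\tf(0,h)\asymp h^{1/\ga}$ of Theorem~\ref{th:freeen} to conclude $\tf(\gb,h)>0$ as soon as $h\geq C\gb^{2\ga/(2\ga-1)}$.

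The main obstacle is the lower bound in the relevant regime: the change of measure must be finely tuned so that its $O(\gb^2)$ entropy cost exactly balances the gap to be covered, which requires letting $\gga\to 1$, $n_0\to\infty$ and $\gb\to 0$ at matching rates. The inhomogeneity of the site Green function from Remark~\ref{rem:Green} only plays an auxiliary role here, but it will become decisive in the marginal case $b=\sqrt{s}$, which is the true goal of the paper.
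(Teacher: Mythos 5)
Your division of labour is exactly the one the paper itself indicates for Theorem~\ref{th:relirel}: the author gives no separate proof, but states that the variance/second-moment argument of Section~2 and the fractional-moment-plus-tilt argument of Section~3 (following \cite{cf:GLT,cf:DGLT}) adapt to $b\neq\sqrt{s}$, and that is precisely your plan (second moment for $b\in(\sqrt s,s)$ and for the upper bound on $h_c(\gb)$ when $b<\sqrt s$; fractional moments with a change of measure for the lower bound). In the relevant regime your sketch is sound in outline, with two small slips that do not change the conclusion you state: the truncation scale should be $n^\ast\simeq 2\log(1/\gb)/\log(s/b^2)$ (the factor $2$ is what produces the exponent $2\ga/(2\ga-1)$), and in the change of measure the entropic cost per site is $\gd_i^2/2$ tuned so that the \emph{total} cost is $O(1)$, $\gb$ entering through the energetic gain rather than through an ``$O(\gb^2)$ cost per site''.

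The genuine gap is in how you reach the sharp constant in \eqref{eq:simsim}. First, Paley--Zygmund together with $R_n\ge (b-1)/b$ cannot give $\tf(\gb,h)\ge\tf(0,h)$: from $\bbP(W_n\ge 1/2)\ge\delta$ you only get $\bbE\log R_n\ge \delta(\log r_n-\log 2)+(1-\delta)\log\tfrac{b-1}{b}$, hence $\tf(\gb,h)\ge\delta\,\tf(0,h)$, a constant-factor bound. Second, the proposed refinement that $\bbE[W_n^2]\to 1$ as $h\searrow 0$ cannot hold uniformly in $n$ (as it would have to, for your argument to upgrade the prefactor): once $r_n\gg 1$ the prefactor in \eqref{eq:relv} is close to $b^2$, so $v_{n+1}\ge (s-o(1))v_n+(s-1)\gga(\gb)$ and the relative variance diverges with $n$ for any fixed $h>0$, $\gb>0$; the contraction by $s/b^2$ only operates while $r_n$ stays of order one. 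The repair is the one carried out at marginality in Section~2 and it transfers verbatim: control $v_n$ only up to $n_2=n_1+k$, a fixed number $k$ of steps beyond the scale $n_1$ where $p_{n_1}\ge 1$ (each extra step multiplies the variance by at most a constant, so $v_{n_2}\le C_k\gb^2\le\gep$ for $\gb$ small), apply Chebyshev at $n_2$, and then use the monotone finite-volume criterion that $s^{-n}\bigl[\bbE\log R_n-\tfrac{\log b}{s-1}+h-\log M(\gb)\bigr]$ is nondecreasing, together with $p_{n_2}\ge (s/b)^k$ (from \eqref{rect}) and $s^{-n}(\log r_n+h)$ nonincreasing, so that $s^{-n_2}\log r_{n_2}$ already captures $\tf(0,h)$ up to a factor $1-\gep$ once $k$ is large, while the additive constants are negligible because $\log r_{n_2}\ge k\ga\log s$. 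With that correction your argument coincides with the proof the paper intends.
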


\rev{As a matter of fact, it can be shown that when $b\ge s$, the annealed free energy grows slower than any power of $(h-h_c(0))$ when $h\to (h_c)_+$. In a sense, $\eqref{eq:anneal}$ holds with $\alpha=0$, and Harris Criterion predicts that $\eqref{eq:simsim}$ holds in that case. One should be able to prove this by using a second moment method approach. For a recent work concerning the case $\alpha=0$ in the non-hierarchical setup, see \cite{cf:AZ_new}}.

\subsection{Main result: the marginal case $b=\sqrt{s}$}

The main novelty of this paper is the result we  present now: 
disorder is relevant for the marginal case $b=\sqrt{s}$.
To our knowledge this is the first example in which one can 
establish the character of the disorder in the marginal case.
 \rev{Moreover, we believe that the method of proof
is sufficiently flexible to be adapted to other contexts.}
 
 \medskip
 
\begin{theorem}\label{th:mainres}
When $b=\sqrt{s}$ there exists positive constants $c_1$, $c_2$ and $\gb_0$ (depending on $s$) such that for every $\gb\le \gb_0$ 

\begin{equation}\label{eq:tbd}
 \exp\left(-\frac{c_1}{\gb^2}\right) \le h_c(\gb)\le \exp\left(-\frac{c_2}{\gb}\right).
\end{equation}
 \end{theorem}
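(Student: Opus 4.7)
The theorem comprises two bounds that require different techniques. The lower bound $h_c(\gb)\ge \exp(-c_1/\gb^2)$ is the crux of the result, since it establishes disorder relevance at marginality; I would prove it by a fractional moment argument combined with a change of measure that exploits the inhomogeneous site Green function and goes beyond the technique of \cite{cf:GLT}. The upper bound $h_c(\gb)\le \exp(-c_2/\gb)$ has a different flavor and I would obtain it from a concentration/second-moment argument that leverages the annealed exponent $\alpha=1/2$ supplied by Theorem~\ref{th:freeen}.

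For the lower bound, fix $\gamma\in(0,1)$. If one can show $\sup_n \bbE[R_n^\gamma]<\infty$, then by Jensen
\[
\tf(\gb,h)\le \lim_{n\to\infty}\frac{1}{\gamma s^n}\log \bbE[R_n^\gamma]=0,
\]
so $h\le h_c(\gb)$. The naive recursion for $\bbE[R_n^\gamma]$ derived from \eqref{eq:rec} involves the factor $\bbE[A^\gamma]^{s-1}$, which at small $\gb$ can only produce a useful contraction for $h$ of order $\gb^2$, far from the target scale $\exp(-c_1/\gb^2)$. The new ingredient is a change of measure: introduce a tilted law $\widetilde\bbP$ on the disorder in $D_n$ whose density factorizes over hierarchical levels, and apply H\"older's inequality in the form
\[
\bbE[R_n^\gamma]\le \widetilde\bbE\left[\left(\frac{d\bbP}{d\widetilde\bbP}\right)^{q}\right]^{1/q}\widetilde\bbE[R_n^{\gamma p}]^{1/p},\qquad \tfrac{1}{p}+\tfrac{1}{q}=1.
\]
The tilt would shift the mean of $\go_i$ by an amount $-\delta_k$ depending on the level $k$ at which the vertex hosting $\go_i$ first appears. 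Since the site Green function at level $k$ equals $b^{-k}$, one would calibrate $\delta_k$ against $b^{-k}$ so that the H\"older cost $\prod_k \exp(C N_k \delta_k^2)$ stays bounded while the tilted recursion simultaneously becomes contractive at each level. At $b=\sqrt{s}$, this balance yields a uniform fractional-moment bound provided $h\le \exp(-c_1/\gb^2)$.

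For the upper bound, at $\alpha=1/2$ Theorem~\ref{th:freeen} gives $\tf(0,h)\ge c h^{2}$, and the annealed correlation length is $n^{\ast}\sim \log(1/h)/\log s$, at which $s^{n^{\ast}}\tf(0,h)$ is of order one. I would show $\bbE\log R_{n^{\ast}}>0$ for $h\ge \exp(-c_2/\gb)$ by comparing $\log R_n$ to $\log r_n$ through the hierarchical recursion. The characteristic feature of the marginal case is that each step of the recursion adds only $O(\gb^2)$ to the variance of $\log R_n$, so the fluctuations stay of order $\gb\sqrt{n^{\ast}}$, which is negligible compared to the order-one annealed signal for the chosen scaling of $h$; hence $\tf(\gb,h)>0$. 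The principal obstacle of the entire proof is the lower bound: designing the level-dependent tilt $\{\delta_k\}$ so that the H\"older cost, the induced shift in the fractional-moment recursion, and the contraction across all $n$ hierarchical levels cooperate is delicate, precisely because at $b=\sqrt{s}$ each level contributes comparable energy and entropy. This is where the inhomogeneity of the site Green function flagged in Remark~\ref{rem:Green} is used in an essential way beyond \cite{cf:GLT}.
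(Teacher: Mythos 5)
Your proposal is correct and follows essentially the same route as the paper: the lower bound via fractional moments combined with a H\"older change of measure whose level-dependent shift is proportional to the site Green function, and the upper bound via a second-moment/variance control up to the annealed correlation length $n\asymp \gb^{-1}$ together with a finite-volume criterion for positivity of the free energy. The only differences are presentational: the paper applies the tilt at a single system size $n\asymp (\eta\gb)^{-2}$ and propagates the estimate to all sizes through the fixed point $x_\theta$ of the fractional-moment recursion, and it controls the relative variance of $R_n$ (then uses Chebyshev) rather than the variance of $\log R_n$.
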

 
 \medskip

The two bounds are obtained by very different methods, and they will be proven in the two sections that follow. The upper bound for $h_c(\gb)$ is proven by controlling the variance for a finite volume, and using a finite volume estimate for the energy. This is essentially the same method as the one developed in \cite{cf:GLT} and it is analagous to what is done in \cite{cf:Ken} for the model based on a renewal process. The lower bound is obtained using fractional moments, and a change of measure on the environment, in fact a shift of the values taken by $\go$ (in the Gaussian case). The argument  uses strongly the specific property of the site disorder model, in fact the value of the shift is site dependent, in order to take advantage of  the fact that some sites are more likely to be visited than others.
\medskip
\rev{
\begin{rem}\rm \label{rem6}
Contrary to non marginal cases, the two bounds we obtain for $h_c(\gb)$ for $b=\sqrt{s}$ do not match, showing that the understanding of the marginal regime is still incomplete. However:
\begin{itemize}
 \item[(1)] The fact that the lower bound given in Theorem \ref{th:mainres} corresponds to the upper bound found in \cite{cf:GLT} is purely accidental and it should not lead to misleading conclusions.
 \item[(2)] In \cite{cf:DHV}, it is predicted that for the bond model, the second moment method gives the right bound for $h_c(\gb)$. In view of this prediction, the upper bound should give the right order for $h_c(\gb)$, although we do not have any mathematics to support this prediction.
\end{itemize}
\end{rem}}

\begin{rem}\label{rem:Green2}\rm 
We can now make Remark \ref{rem:Green} more precise. The case $b=\sqrt{s}$ on which we focus is really similar the pinning model defined with the simple random walk. 
Indeed, for integer $b$ (recall definition \eqref{eq:part}) The expected number of contacts with the interface $\sigma$ is
\begin{equation}
\bE_n\left(\sum_{i=1}^{s^n-1} \ind_{\{S_i=\sigma_i\}}\right)=\sum_{i=1}^n b^{-i}(s-1)s^{i-1}\,
=\, \frac{s-1}{s-b}(s/b)^n.
\end{equation}
When $b=\sqrt{s}$, it is proportional to $s^{n/2}$ which is the square root of the length of the system. This is also the case for the random walk in dimension $1$ where
$\bE\left(\sum_{i=1}^n \ind_{\{S_i=0\}}\right)$  behaves like $ n^{1/2}$ and we have thus a clear analogy between site hierarchical model and random walk model. On the other hand Remark \ref{rem6}(2) possibly suggests that $h_c(\gb)-h_c(0)$ behaves like $\exp(-c/\gb^2)$ both in the bond hierarchical and random walk model (and not like $\exp(-c/\gb)$). 
\end{rem}
\rev{
\begin{rem}\rm   \label{remrev}
Since the first version of the present paper, some substantial progresses have been made in the understanding of pinning model at marginality.
Using a different method, in \cite{cf:GLT_marg}, Giacomin and Toninelli together with the author proved marginal relevance of disorder for both the hierarchical-lattice with bond disorder model and the random-walk based model. While the method we present here turns out to be less performing than the method in \cite{cf:GLT_marg}
on bond disorder and random walk based models, it should be pointed out
that it does improve on the results in \cite{cf:DGLT} (in the direction of the statements in \cite{cf:KZ}). Moreover
our inhomogeneous shifting procedure, with respect to the more complex change of measure in \cite{cf:GLT_marg},
has the advantage of being very flexible and easier to adapt to more general contexts.
On the other hand, it can be shown that adapting the procedure in \cite{cf:GLT_marg} to the
site disorder model would lead to replacing the exponent $2$ in the left-most side of \eqref{eq:tbd} with $4/3$,
but the proof is substantially heavier than the one that we present, for a result that is still comparable on a qualitative level
(the upper bound is not matched).
\end{rem}}

\medskip

In the sequel we focus on the proof of the case $b=\sqrt{s}$, but the arguments can be adapted (and they get simpler) to prove also the inequalities of 
Theorem \ref{th:relirel}. We stress once again that the case $b\neq\sqrt{s}$ 
is  detailed in \cite{cf:GLT} 
for the bond model.

\section{The upper bound: control of the variance}
The main result of this section is:

\begin{proposition}\label{th:lvbds}
For any fixed $s$, and $b=\sqrt{s}$ one can find constants $c_s$ and $\gb_0$, such that for all $\gb\le \gb_0$,
\begin{equation}
h_c(\gb)\le \exp\left(-\frac{c_s}{\gb}\right).
\end{equation}
\end{proposition}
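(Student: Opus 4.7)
The plan is to prove the bound by a second-moment (Paley--Zygmund) argument applied at a carefully chosen finite level $n$. Both $n$ and $h$ will be tuned so that the annealed partition function $r_n$ has grown past a fixed threshold $r_\ast > 1$ while the normalized second moment $y_n := \bbE[R_n^2]/r_n^2$ is still within $1+O(\gb)$ of $1$; Chebyshev will then convert the variance bound into a lower bound on $\bbE[\log R_n]$, and the supermultiplicative structure of \eqref{eq:rec} will propagate positivity of this finite-volume quantity to positivity of the free energy.

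Writing $x_n := r_n^s e^{(s-1)h}$ and $\mu := M(2\gb)/M(\gb)^2 = 1 + \gb^2 + O(\gb^3)$, expanding \eqref{eq:rec} and using independence of the $R_n^{(j)}$'s and $A_j$'s yields
\begin{equation*}
y_{n+1} \;=\; 1 + \frac{(y_n^{s}\,\mu^{s-1} - 1)\, x_n^2}{(x_n + b - 1)^2}.
\end{equation*}
For $b=\sqrt{s}$ and $x_n$ close to $1$ the ratio $x_n^2/(x_n+b-1)^2$ equals $1/s$, and linearizing in $\delta_n := y_n - 1$ and in $\gb$ gives $\delta_{n+1} \le \delta_n + C_s\,\gb^2$, so $\delta_n \lesssim \gb^2 n$ throughout the near-fixed-point regime. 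In parallel, linearizing \eqref{recr} around $r=1$ gives $r_{n+1} - 1 \simeq \sqrt{s}\,(r_n - 1) + c_s h$, whence $r_n - 1 \simeq c_s\, h\, s^{n/2}$ and $r_n$ reaches any prescribed $r_\ast > 1$ at time $n_\ast \simeq \tfrac{2}{\log s}\log(1/h)$. Setting $h = \exp(-c_s/\gb)$ then gives $n_\ast \lesssim 1/\gb$ and hence $\delta_{n_\ast} \lesssim \gb$, comfortably below the threshold $1/8$ needed to apply Chebyshev.

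With $\delta_{n_\ast}\le 1/8$ and $r_{n_\ast}\ge r_\ast$, Chebyshev gives $\bbP(R_{n_\ast}\ge r_{n_\ast}/2) \ge 1/2$; combining with the deterministic lower bound $R_n \ge (b-1)/b$ (visible from \eqref{eq:rec}), one obtains
\begin{equation*}
\bbE[\log R_{n_\ast}] \;\ge\; \tfrac{1}{2}\log(r_\ast/2) - \tfrac{1}{2}\log\!\bigl(b/(b-1)\bigr),
\end{equation*}
which can be made arbitrarily large by choosing $r_\ast$ large. Dropping the $(b-1)$ additive term in \eqref{eq:rec} also yields the one-step propagation $\bbE[\log R_{n+1}] \ge s\,\bbE[\log R_n] - C$ for an $O(1)$ constant $C$; iterating this from $n = n_\ast$ shows $\tf(\gb,h) = \lim_n s^{-n}\bbE[\log R_n] > 0$ as soon as $\bbE[\log R_{n_\ast}] > C/(s-1)$, so $h_c(\gb) \le h = \exp(-c_s/\gb)$.

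The main obstacle is keeping the linearization of the variance recursion valid across the full escape interval $[0, n_\ast]$: once $r_n$ moves appreciably away from $1$, the factor $x_n^2/(x_n+b-1)^2$ climbs from $1/s$ towards $1$ and the coefficient $s$ in $y_n^{s}$ begins to amplify $\delta_n$ multiplicatively at each step. The weaker rate $\exp(-c_s/\gb)$---in place of the $\exp(-c_s/\gb^2)$ one might hope for from a sharper tracking---corresponds precisely to the slack needed to stop the iteration well before $\delta_n$ enters this multiplicative regime, and matches the rate obtained in \cite{cf:GLT} for the bond-disorder model by the same kind of argument.
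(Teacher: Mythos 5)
Your proposal is correct and takes essentially the same route as the paper: your normalized second moment $y_n-1$ is exactly the paper's relative variance $v_n$ of \eqref{eq:relv}, and the tuning $h=\exp(-c_s/\gb)$ with $n\sim 1/\gb$, the Chebyshev step combined with the deterministic floor $R_n\ge (b-1)/b$, and the finite-volume criterion obtained by dropping the $(b-1)$ term in \eqref{eq:rec} are precisely the paper's steps. The only point where the paper is more explicit is your ``main obstacle'': Lemma \ref{th:dddd} controls the $(1+O(p_i))$ correction factors by a bootstrap, using that $p_i$ decays geometrically backwards from the first level where it reaches $1$, so these corrections cost only a constant factor rather than forcing an early stop.
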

Such a result is achieved by a second moment computation and for this we have to get some bounds on the first two moments of $R_n$.

\subsection{A lower bound on the growth of $r_n$}
We prove a technical result on the growth of $r_n$ when $h>0$.
For convenience we write $p_n:=(r_n-1)$. We have $p_0=0$ and \eqref{recr} becomes
\begin{equation}
p_{n+1}=\frac{1}{b}\left((1+p_n)^s\exp((s-1)h)-1\right). \label{recrr}
\end{equation}
Note that if $h>0$, $p_n>0$ for all $n \ge 1$, so that \eqref{recrr} implies
\begin{equation}
p_{n+1}\ge \frac{s}{b} p_n+\frac{h(s-1)}{b}\ge \frac{s}{b}p_n+\frac{h}{b}, \label{rect}
\end{equation}
and therefore 
\begin{equation}\label{pnge}
p_n\ge (s/b)^{n-1} (h/b).
\end{equation} 

 








\subsection{An upper bound for the growth the variance}
We prove now a technical result concerning the variance of $R_n$ which is crucial for the proof of Proposition \ref{th:lvbds}.
Before stating the result we introduce some notation and write the induction equation for the variance.
The variance $\Delta_n$ of the random variable $R_n$ is given by the following recursion

\begin{equation}
 \gD_{n+1}=\frac{1}{b^2}\left(\left(\Delta_n+r_n^{2}\right)^s\exp\left((s-1)(\gga(\gb)+2h)\right)-r_n^{2s}\exp(2(s-1)h)\right),
\end{equation}
where $\gga(\gb)=\log M(2\gb)-2\log M(\gb)$ (recall that $M(\gb)=\bbE[\exp(\gb\go_1)])$. Because $\go$ has unit variance, we have $\gga(\gb)\stackrel{\gb\searrow 0}{\sim} \gb^2$.\\
Let $v_n$ denote the relative variance $\gD_n/(r_n)^2$.
We have
\begin{equation} \label{eq:relv}
v_{n+1}=\frac{b^2r_n^{2s}\exp\left((s-1)2h\right)}{\left(r_n^s\exp\left((s-1)h\right)+(b-1)\right)^2}\frac{\exp((s-1)\gga(\gb))(v_n+1)^s-1}{b^2}.
\end{equation}
Let $n_1$ be the smallest integer such that $p_n\ge 1$. 

\begin{lemma}\label{th:dddd}
We can find constants $c_5$ and $\gb_0$ such that for all $\gb<\gb_0$,
for $h=\exp(-c_5/\gb)$,
\begin{equation}
v_{n_1}\le \gb
\end{equation}
\end{lemma}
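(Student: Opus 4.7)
The plan is to analyze the recursion \eqref{eq:relv} by factorising it into a deterministic \emph{amplification} depending on the pure sequence $r_n$ and an almost-linear update in $v_n$. From \eqref{recrr} one has $r_n^s\exp((s-1)h)=\sqrt{s}\,p_{n+1}+1$, so, using $b=\sqrt{s}$, \eqref{eq:relv} rewrites as
\begin{equation*}
v_{n+1} \;=\; A_n\cdot \frac{\exp((s-1)\gamma(\beta))(v_n+1)^s-1}{s},
\qquad A_n\;:=\;\left(\frac{\sqrt{s}\,p_{n+1}+1}{p_{n+1}+1}\right)^{\!2}.
\end{equation*}
Since $1\le A_n\le s$ and the second factor is linear in $v_n$ up to quadratic corrections, everything reduces to controlling the cumulative amplification $S_{n_1}:=\prod_{j=0}^{n_1-1}A_j$ and the number of iterations $n_1$.

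The first (and key) step is to show that $S_{n_1}$ is bounded by a constant depending only on $s$. This is where the marginality $b=\sqrt{s}$ enters. From \eqref{rect} one gets $p_j\le p_{j+1}/\sqrt{s}$, hence $p_j\le s^{-(n_1-j)/2}p_{n_1}$; together with the a priori bound $p_{n_1}\le 2^{s+1}/\sqrt{s}$ (coming from $p_{n_1-1}<1$ and \eqref{recrr} for $h$ small), this gives $\sum_{j=1}^{n_1}p_j\le C_s$. Since $\log A_j\le 2(\sqrt{s}-1)p_{j+1}$ for $p_{j+1}$ small, this yields $S_{n_1}\le \exp(2(\sqrt{s}-1)C_s)=:C'_s$: although individual $A_j$ can be as large as $s$, their product up to $n_1$ stays bounded, precisely because $p_n$ grows geometrically at rate $\sqrt{s}$. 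Moreover, \eqref{pnge} gives $n_1\le \tfrac{2}{\log s}\log(1/h)+O_s(1)$.

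Next, I would compare $v_n$ with the linear surrogate $\tilde v_{n+1}=A_n(\tilde v_n+\tfrac{s-1}{s}\gamma(\beta))$ with $\tilde v_0=0$. Unfolding the recursion,
\begin{equation*}
\tilde v_n \;=\; \tfrac{s-1}{s}\,\gamma(\beta)\sum_{k=0}^{n-1}\prod_{j=k}^{n-1}A_j \;\le\; \tfrac{s-1}{s}\,\gamma(\beta)\,n\,S_n,
\end{equation*}
so $\tilde v_{n_1}\le C''_s\,\gamma(\beta)\log(1/h)$. Since $\gamma(\beta)\sim \beta^2$ for $\beta\searrow 0$, choosing $h=\exp(-c_5/\beta)$ gives $\tilde v_{n_1}\le C'''_s\,c_5\,\beta$, which is at most $\beta/2$ provided $c_5$ is small enough in terms of $s$.

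Finally one must show that the nonlinear part does not spoil the bound. A Taylor expansion yields
\begin{equation*}
\exp((s-1)\gamma)(v+1)^s-1 \;\le\; sv+(s-1)\gamma+D_s(v^2+\gamma v+\gamma^2),
\end{equation*}
valid as long as $v$ and $\gamma$ stay small. Then an induction of the form $v_n\le 2\tilde v_n$ is propagated step by step: the extra error in one iteration is bounded by $A_n\,D_s(\tilde v_{n_1}^2+\gamma\tilde v_{n_1}+\gamma^2)=O_s(c_5^2\beta^2)$, which is absorbed by the linear driving term $A_n\cdot(s-1)\gamma/s=\Theta_s(\beta^2)$ as soon as $c_5$ (and hence $\beta_0$) is small enough. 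Closing the induction at $n=n_1$ yields $v_{n_1}\le \beta$. The main obstacle in this proof is precisely this juggling: the bound $S_{n_1}=O_s(1)$ only holds because of $b=\sqrt{s}$, and this is also what forces the final estimate to be of order $\gamma(\beta)\log(1/h)\sim\beta^2\log(1/h)$; only then does the exponential scaling $h=\exp(-c_5/\beta)$ (rather than a polynomial one) produce $v_{n_1}\le\beta$, which is the exact input needed to run the second moment argument for Proposition~\ref{th:lvbds}.
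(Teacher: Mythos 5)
Your proof is correct and follows essentially the same route as the paper: you analyze the relative-variance recursion \eqref{eq:relv} at $b=\sqrt{s}$, where the linear amplification factor is $1$, bound the cumulative multiplicative correction coming from the $p_i$ by an $s$-dependent constant via the geometric growth $p_{i+1}\ge \sqrt{s}\,p_i$ capped at $n_1$, and combine $n_1\lesssim \log(1/h)/\log s\sim c_5/\gb$ with the per-step injection $\gga(\gb)\sim\gb^2$ to get $v_{n_1}=O_s(c_5\gb)\le\gb$. The differences are only bookkeeping: you use the exact factorization with prefactor $A_n$ and a linear surrogate closed by a $v_n\le 2\tilde v_n$ induction, whereas the paper Taylor-expands the prefactor and concludes by contradiction at the first index where $v_n\ge\gb$; both arguments are sound.
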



\begin{proof}
We make a Taylor expansion of \eqref{eq:relv} around $r_n=1$, $h=0$, $\gb=0$, $v_n=0$,

\begin{equation}
v_{n+1}=\left(1+O(h+p_n)\right)\left(\frac{(s-1)}{b^2}\gb^2+\frac{s}{b^2}v_n+O(v_n^2)+o(\gb^2)\right).
\end{equation}
From the previous line, we can find a constant $c_4$ such that if $0<p_n\le 1$, $h\le1$, $\gb\le1$ and $v_n\le 1$ we have (recall that $b=\sqrt{s}$)
\begin{equation}
v_{n+1}\le c_4\gb^2+v_n(1+c_4v_n)(1+c_4(h+p_n)).
\end{equation}
By induction, we get that as long if $v_{n-1}\le 1$ and $p_n\le 1$, we have
\begin{equation}
v_n\le n c_4\gb^2\prod_{i=0}^{n-1}(1+c_4v_i)(1+c_4(h+p_i)). 
\end{equation}
\rev{By \eqref{pnge} we have $(h+p_i)\le (1+b)p_i$ for all $i\ge 1$}. Changing the constant $c_4$ if necessary we get the nicer formula
\begin{equation}
v_n\le n c_4\gb^2\prod_{i=1}^{n-1}(1+c_4v_i)(1+c_4p_i). \label{nnn}
\end{equation}
Let $n_0$ be the smallest integer such that $v_{n_0}\ge \gb$. We have to show that we cannot have $n_0\le n_1$.
If $n_0\le n_1$, \eqref{nnn} implies
\begin{equation}
v_{n_0}\le n_0 c_4 \gb^2 \prod_{i=1}^{n_0-1}(1+c_4v_i)(1+c_4 p_i). \label{eq:pff}
\end{equation}
As $p_{n+1}\ge (s/b)\, p_n$ for all $n\ge 0$ (cf.\ \eqref{rect}), and $p_{n_1-1}\le 1$, we have $p_{n_1-2}\le (b/s)$,\\
 $p_{n_1-3}\le (s/b)^{-2}$ and by induction for all $i\le n_1 -1$
\begin{equation}
 p_i\le (s/b)^{i-(n_1-1)}.
\end{equation}
Therefore
\begin{equation}
\prod_{i=1}^{n_0-1} \left(1+c_4 p_i\right)\le\prod_{i=1}^{n_1-1} \left(1+c_4 p_i\right)\le \prod_{i=1}^{n_1-1} \left(1+c_4 (s/b)^{i-(n_1-1)}\right)\le \prod_{k=0}^{\infty}\left(1+c_4(s/b)^{-k}\right).
\end{equation}
The last term is finite, and is clearly not dependent on $\gb$ or $h$.
Moreover, because $p_n\ge (s/b)^n (h/b)$, it is necessary that 
\begin{equation}
n_1-2\le \frac{\log (b/h)}{\log (s/b)}.
\end{equation}
Replacing $b$ and $h$ with $\sqrt{s}$ and $\exp\left(-\frac{c_5}{\gb}\right)$, we get
\begin{equation}
n_0\le n_1\le \frac{2c_5}{\gb\log s}+1\le \frac{3c_5}{\gb\log s}
\end{equation}
for $\gb$ small enough.
Replacing $n_0$ by this upper bound in \eqref{eq:pff} gives us that $n_0\le n_1$ implies
\begin{equation}
\gb\le v_{n_0}\le \gb \left[\frac{3c_5c_4}{\log s}(1+c_4\gb)^{\frac{3c_5}{\gb\log s}}\prod_{k=0}^{\infty}(1+c_4 (s/b)^{-k})\right].
\end{equation}
If $c_5$ is chosen small enough the right-hand side is smaller than the left--hand side.
\end{proof}

\subsection{Proof of Proposition \ref{th:lvbds}}

Let us choose $c_5$ as in Lemma \ref{th:dddd}, $\gb$ small enough, and $h=\exp(-c_5/\gb)$. We fix some small $\gep>0$.
From Lemma \ref{th:dddd}, we have $v_{n_1}\le \gb$ and $r_{n_1}\ge 2$. The idea of the proof is to consider some $n$ a bit larger that $n_1$ such that $r_n$ is big and $v_n$ is small, in order to get a good bound on $\bbE \log R_n$.

We use \eqref{eq:relv} to get a rough bound on the growth of $v_n$ when $n\ge n_1$,
\begin{equation}
v_{n+1}\le (1+v_n)^s\exp\left[\gga(\gb)(s-1)\right]-1.
\end{equation}
Hence, one can find a constant $c_6$ such that as long as $v_n\le 1$ and $\gb$ small enough, we have
\begin{equation}
v_{n+1}\le c_6(v_n+\gb^2).
\end{equation}
If we choose $c_6>1$, this implies that for any integer $k\ge 0$
\begin{equation}
v_{n_1+k}\le c_6^k(\gb+k\gb^2),
\end{equation}
provided the right--hand side is less than $1$.
\medskip

We fix $k$ large enough, and $\gb_0$ such that  $c_6^k(\gb+k\gb^2)\le \gep$ for all $\gb\le \gb_0$.
From \eqref{rect} and the definition of $n_1$, we have $r_{n_1+k}\ge 1+(s/b)^k$. Let $k$ be a fixed (large) integer, \rev{Chebycheff inequality implies that}
\begin{equation}
\bbP\left(R_{n_1+k}\le (1/2) r_{n_1+k}\right)\le 4v_{n_1+k}\le 4\gep.
\end{equation}
We write $n_2=n_1+k$. Using the fact that $R_n\ge (b-1)/b$ we have
\begin{multline}
 \bbE\left[\log R_{n_2}\right]\ge  \left[\log r_{n_2}-\log 2\right]\bbP\left(R_{n_2}\ge (1/2) r_{n_2}\right)+\log\frac{b-1}{b}\bbP\left(R_{n_2}\le (1/2)r_{n_2}\right)\\
										\ge  (1-4\gep)\left[\log (1+(s/b)^k)-\log 2\right]-4\gep \log \frac{b}{b-1}.
\end{multline}
By choosing a suitable $k$, this can be made arbitrarily large.
Taking the $\log$ in \eqref{eq:rec} and forgetting the $(b-1)$ term gives
\begin{equation}
\bbE \log R_{n+1}\ge s \bbE\left[\log R_{n}\right]+(s-1)\left(h-\log M(\gb)+\bbE[\go_1]\right)-\log b.
\end{equation} 
Therefore, the sequence $s^{-n}\left[\bbE\left[\log R_n\right] -\frac{\log{b}}{s-1}+h-\log M(\gb)\right]$ is increasing (recall $\bbE[\go_1]=0$).
With our settings we have
\begin{equation}
\bbE\left[\log R_{n_2}\right]> \frac{\log{b}}{s-1}+\log M(\gb)-h,
\end{equation}
therefore 
\begin{equation}
\tf(\gb,h)=\lim_{n\rightarrow\infty}s^{-n}\left[\bbE\left[\log R_n\right] -\frac{\log{b}}{s-1}+h-\log M(\gb)\right]>0.
\end{equation}
\qed

\section{The lower bound: Fractional moment and improved shifting method}

In this section, we prove the lower bound by improving the method of measure-shifting used in \cite{cf:GLT} and \cite{cf:DGLT}. Instead of considering an homogeneous shift on the environment, we chose to shift more the sites that are more likely to be visited.

\begin{proposition}\label{th:upbd}
When $b=\sqrt{s}$, there exists a constant $c_s$ such that for all $\gb\le 1$ we have
\begin{equation}
h_c\ge \exp(-c_s/\gb^2).
\end{equation}
\end{proposition}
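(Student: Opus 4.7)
My plan is to prove the lower bound via the fractional moment method combined with an inhomogeneous Gaussian shift of the environment, exploiting the level-dependent Green function $b^{-\ell(i)}$ of Remark \ref{rem:Green}. Fix $\gga\in(0,1)$ close to $1$ (to be tuned later) and set $U_n:=\bbE[R_n^\gga]$. Since $\bbE[\log R_n]\le\gga^{-1}\log U_n$ by Jensen's inequality, a uniform-in-$n$ bound on $(U_n)_n$ forces $\tf(\gb,h)=0$ and hence $h\le h_c(\gb)$; my goal is thus to produce such a bound for every $h\le\exp(-c_s/\gb^2)$.

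From the recursion \eqref{eq:rec}, the subadditivity $(a+b)^\gga\le a^\gga+b^\gga$ valid for $a,b\ge 0$ and $\gga\in(0,1)$, and the mutual independence of the $R_n^{(j)}$'s and $A_j$'s, one derives
\begin{equation*}
U_{n+1}\;\le\;b^{-\gga}\bigl[\bbE[A^\gga]^{s-1}\,U_n^s+(b-1)^\gga\bigr]\;=:\;F_\gga(U_n).
\end{equation*}
For $\gga$ close enough to $1$ and $\gb,h$ small (with $h\ll\gb^2$, as is our case), a short analysis of the convex increasing map $F_\gga$ shows that it admits two positive fixed points $U_1^\ast<U_2^\ast$, with $U_1^\ast$ attracting on $[0,U_2^\ast)$. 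Hence it suffices to exhibit a single $n_0$ for which $U_{n_0}<U_2^\ast$: the iterates $(U_n)_{n\ge n_0}$ then stay in $[0,U_2^\ast)$ and converge to $U_1^\ast$, giving $\sup_n U_n<\infty$.

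To produce such an $n_0$ I would change the law of the environment (assumed Gaussian by a standard reduction). Given a non-negative sequence $(c_k)_{k=1}^{n_0}$, let $\tilde\bbP$ be the law under which the $\go_i$'s are independent with $\go_i\sim\cN(-c_{\ell(i)},1)$, where $\ell(i)\in\{1,\ldots,n_0\}$ is the level at which site $i$ appeared in the construction of $D_{n_0}$. H\"older's inequality with conjugate exponents $1/\gga$ and $1/(1-\gga)$ together with an explicit computation of the Gaussian Radon--Nikodym derivative yields
\begin{equation*}
U_{n_0}\;\le\;\tilde\bbE[R_{n_0}]^{\gga}\exp\!\Bigl(\frac{\gga}{2(1-\gga)}\sum_{k=1}^{n_0} N_k\,c_k^2\Bigr),\qquad N_k:=(s-1)s^{k-1}.
\end{equation*}
Integrating out the shifted Gaussians site by site identifies $\tilde\bbE[R_{n_0}]$ with the annealed partition function $\bar r_{n_0}$ of a pinning model with level-dependent effective field $\hat h_k:=h-\gb c_k$, computed by iterating \eqref{recr} from $\bar r_0=1$ with $\hat h_{n_0-j}$ used at the $(j+1)$-th step (finest level first, coarsest last).

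The crux of the argument, and the step I expect to be the main technical obstacle, is the choice of the profile $(c_k)$. Guided by the ``shift more the more-visited sites'' heuristic announced in the introduction, I would take $c_k=\gl\,b^{-k}$: in the marginal case $b=\sqrt s$ this gives the $k$-independent balance $N_k c_k^2=(s-1)\gl^2/s$, so the total cost equals $(s-1)\gl^2 n_0/s$ -- linear in $n_0$ rather than exponential as it would be under a uniform shift. Choosing $n_0$ of order the annealed correlation length $\log(1/h)/\log s$, $\gl$ of order $\gb$, and $\gga$ appropriately close to $1$, the exponential cost factor in the H\"older bound stays bounded by a constant precisely when $\log(1/h)/\gb^2$ is bounded by an absolute constant, i.e.\ when $h\ge\exp(-c_s/\gb^2)$. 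Meanwhile, in this regime $\gb c_k$ exceeds $h$ by many orders of magnitude at every level, so the annealed recursion for $\bar r_{n_0}$ is driven by strictly negative fields whose linearized integrated contribution $\sum_k b^k\hat h_k$ is a negative quantity of order one; a quantitative analysis extending the techniques of Section 2 to non-constant pinning fields shows that $\bar r_{n_0}$ remains bounded away from $U_2^\ast$ from below. Tracking constants carefully yields the seeding bound $U_{n_0}<U_2^\ast$ and with it the announced lower bound on $h_c(\gb)$.
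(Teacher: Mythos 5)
Your skeleton is exactly the paper's: fractional moments $\bbE[R_n^\gga]$ with the map $x\mapsto(x^s a_\gga^{s-1}+(b-1)^\gga)/b^\gga$, a H\"older/Gaussian change of measure with cost $\exp\bigl(\frac{\gga}{2(1-\gga)}\sum_k N_k c_k^2\bigr)$, and a shift profile proportional to the Green function, $c_k=\gl\, b^{-k}$, so that each level contributes equally both to the cost and to the depression of the shifted annealed recursion. The paper uses precisely this profile (its $\gd_i=\frac{\eta}{\sqrt n}\,b^{-(n-i)}$), and your identification of $\tilde\bbE[R_{n_0}]$ with an annealed recursion driven by level-dependent fields $h-\gb c_k$ is its equation \eqref{eq:shiftrec}.

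The genuine gap is in the quantitative budget you announce, and it is not a detail: with your tuning ($\gl\asymp\gb$ with an unspecified order-one constant, $\gga$ ``close to $1$'', and the criterion that the H\"older cost be ``bounded by a constant'') the argument does not close. Bounded cost is not enough: since $R_n\ge(b-1)/b$ deterministically, the gain factor $\tilde\bbE[R_{n_0}]^\gga$ is bounded below by a constant, and in the regime where your linearization is valid it is only $1-O(\gb\gl\, n_0)$; so the cost must be kept \emph{close to} $1$ (below the reciprocal of the attainable gain), not merely bounded. Comparing per level, the cost contributes $\frac{\gga}{2(1-\gga)}\frac{(s-1)}{s}\gl^2$ to the log while the drift contributes only about $\frac{(s-1)}{s}\gb\gl$ to $1-\tilde r$, so the gain beats the cost only if $\gl\lesssim(1-\gga)\gb$; for instance at $s=2$ the fixed point $x_\gga$ of the fractional-moment map exists only for $\gga\gtrsim0.88$, and then an order-one choice $\gl\asymp\gb$ makes the cost term dominate. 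In other words, your two requirements ($\gga$ near $1$ and $\gl\asymp\gb$) conflict through the $\frac1{1-\gga}$ in the cost. The fix is exactly the paper's parameter choice: take the amplitude a small multiple of $\gb$, namely $\gl=\eta/\sqrt n=\eta^2\gb$ with $n=1/(\eta^2\gb^2)$ and $h=s^{-n}$, so that the total cost is $\exp\bigl(\frac{\gga(s-1)\eta^2}{2(1-\gga)s}\bigr)\le 1+\gep/2$ \emph{uniformly in $n$}, while the accumulated drift $\asymp\eta\gb\sqrt n$ is a constant, enough to force $\tilde r_n\le 1-2\gep$ (this last step also needs the bootstrap controlling the quadratic corrections, the lemma you defer to ``extending Section 2''). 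It is this balance, not ``cost bounded $\Leftrightarrow\log(1/h)\lesssim\gb^{-2}$'', that produces the $\exp(-c_s/\gb^2)$ scale; as written, your proposal asserts the conclusion of the paper's key lemma without a correct accounting that would deliver it.
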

\subsection{Fractional moment}\label{th:fracmom}

\begin{lemma}
Fix $\theta\in(0,1)$ and set
\begin{equation}
x_{\theta}=\max\left\{ x \ \Big| \ \frac{x^s+(b-1)^{\theta}}{b^{\theta}}\le x\right\}.
\end{equation}
\rev{(Note that $x_{\theta}$ is defined whenever $\theta$ is close enough to $1$. When it is defined we have $x_{\theta}<1$ as the inequality cannot be fulfilled for $x\ge 1$.)}\\
 If $\bbE\left[\exp\left(\theta(\gb\go_1-\log M(\gb)+h)\right)\right]\le 1$, and if there exists $n$ such that 
 $\bbE[R_n^{\theta}]\le x_\theta$, then $\tf(\gb,h)=0$.
\end{lemma}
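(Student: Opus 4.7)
The plan is to turn the recursion \eqref{eq:rec} into a deterministic recursive bound on the sequence $u_n := \bbE[R_n^\theta]$. The key tool is the elementary subadditivity inequality $(a+c)^\theta \le a^\theta + c^\theta$, valid for $a,c \ge 0$ and $\theta \in (0,1)$. Applied to \eqref{eq:rec}, whose right-hand side is a sum of two non-negative terms, it gives
\begin{equation*}
R_{n+1}^\theta \, \le\, \frac{1}{b^\theta}\left(\prod_{j=1}^{s}\bigl(R_n^{(j)}\bigr)^\theta \prod_{j=1}^{s-1} A_j^\theta \, +\, (b-1)^\theta\right).
\end{equation*}
Taking expectations, using the independence of the factors $R_n^{(j)}$ and $A_j$, together with the standing hypothesis $\bbE[A^\theta]=\bbE[\exp(\theta(\gb\go_1-\log M(\gb)+h))]\le 1$, I obtain
\begin{equation*}
u_{n+1}\, \le\, \frac{u_n^s + (b-1)^\theta}{b^\theta}\, =:\,  \phi(u_n).
\end{equation*}

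Next I would exploit the fact that $\phi$ is non-decreasing together with the defining property $\phi(x_\theta)\le x_\theta$. If the assumption $u_n \le x_\theta$ holds for some $n$, then $u_{n+1}\le \phi(u_n)\le \phi(x_\theta)\le x_\theta$, and by a straightforward induction $u_m \le x_\theta$ for every $m\ge n$. Jensen's inequality then bounds the quenched log-partition function:
\begin{equation*}
\bbE[\log R_m]\, \le\, \frac{1}{\theta}\log \bbE[R_m^\theta]\, \le\, \frac{1}{\theta}\log x_\theta\, <\, 0
\end{equation*}
for every $m\ge n$, the strict inequality coming from the parenthetical fact $x_\theta<1$. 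Dividing by $s^m$ and letting $m\to\infty$ gives $\tf(\gb,h)\le 0$, and combined with the non-negativity of the free energy established earlier, this forces $\tf(\gb,h)=0$.

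There is no serious obstacle in this lemma: the only point that requires a moment of care is the direction of the fractional-power inequality, which goes the right way precisely because $\theta<1$ and all quantities are non-negative. The genuine difficulty of the lower bound in Theorem~\ref{th:mainres} is not here but in the subsequent step, namely exhibiting a $\theta$ close enough to $1$ and an $n$ for which $\bbE[R_n^\theta]\le x_\theta$; this is where the inhomogeneous tilt of the environment, tuned to the site-dependent Green function, will have to be constructed and estimated.
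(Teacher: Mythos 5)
Your proof is correct and follows essentially the same route as the paper: the subadditivity bound $(a+c)^\theta\le a^\theta+c^\theta$ applied to the recursion, independence and $\bbE[A^\theta]\le 1$ to get $u_{n+1}\le (u_n^s+(b-1)^\theta)/b^\theta$, monotonicity of this map to propagate $u_m\le x_\theta$, and Jensen to conclude $\tf(\gb,h)=0$. Your closing observation about where the real work lies (the change of measure making $u_n\le x_\theta$) is also accurate.
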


\begin{proof}

Let $0<\theta<1$ be fixed, an $u_n=\bbE[ R_n^{\theta}]$ denotes the fractional moment of $R_n$.
We write $a_{\theta}=\bbE[ A_1^{\theta}]=\bbE\left[\exp\left(\theta(\gb\go_1-\log M(\gb)+h)\right)\right]$.
Using the basic inequality $\left(\sum x_i\right)^{\theta}\le \sum x_i^{\theta}$ and averaging with respect to $\bbP$,
we get from \eqref{eq:rec}

\begin{equation}
 u_{i+1}\le \frac{u_i^s a_{\theta}^{s-1}+(b-1)^{\theta}}{b^{\theta}}.
\end{equation}
With our assumption, $a_{\theta}\le 1$, so that
\begin{equation}
 u_{i+1}\le \frac{u_i^s+(b-1)^{\theta}}{b^{\theta}}.
\end{equation}
The map 
\begin{equation}
g_\theta:\ x\mapsto \frac{x^s+(b-1)^{\theta}}{b^{\theta}}\quad \text{ for } x\ge 0 \label{eq:gg},
\end{equation}
 is non-decreasing so that if $u_n\le x_\theta$ for some $n$, then $u_i\le x_\theta$ for every $i\ge n$.
In this case the free energy is $0$ as
\begin{equation}
\tf(\gb,h)=\lim_{n\to\infty}s^{-n}\bbE \log R_n\le \liminf_{n\to\infty}\frac{1}{\theta s^n}\log u_n=0,
\end{equation}
where the last inequality is just Jensen inequality.
\end{proof}

We add a second result that guaranties that the previous lemma is useful.

\begin{lemma}
\begin{equation}
\lim_{\theta\rightarrow 1^{-}} x_{\theta}=1.
\end{equation}
\end{lemma}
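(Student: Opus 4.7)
The plan is to rewrite the defining inequality $g_\theta(x)\le x$ as a polynomial inequality and then exploit convexity together with perturbative continuity as $\theta\to 1^-$. Set
\begin{equation*}
F(x,\theta)\, :=\, x^s - b^\theta x + (b-1)^\theta,
\end{equation*}
so that $g_\theta(x)\le x$ is equivalent to $F(x,\theta)\le 0$. Since $s\ge 2$, the function $x\mapsto F(x,\theta)$ is strictly convex on $[0,\infty)$, with $F(0,\theta)=(b-1)^\theta>0$ and $F(x,\theta)\to+\infty$ as $x\to\infty$. Therefore the sub-level set $\{x\ge 0:F(x,\theta)\le 0\}$ is either empty or a compact interval; whenever it is non-empty, its right endpoint $x_\theta$ is a zero of $F(\cdot,\theta)$, and $F(\cdot,\theta)>0$ strictly to the right of $x_\theta$.

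A direct computation gives $F(1,1)=1-b+(b-1)=0$ and $\partial_x F(1,1)=s-b>0$ (recall $b=\sqrt s<s$), so $x=1$ is a simple zero of $F(\cdot,1)$ at which this function is strictly increasing. Fix $\gep\in(0,1)$ small enough that
\begin{equation*}
F(1-\gep,1)<0 \qquad \text{and} \qquad F(1+\gep,1)>0.
\end{equation*}
By subadditivity of $t\mapsto t^\theta$ for $\theta\in(0,1)$ applied to $b=1+(b-1)$, one has $F(1,\theta)=1-b^\theta+(b-1)^\theta\ge 0$, with $F(1,\theta)\to 0$ as $\theta\to 1^-$. Joint continuity of $F$ in $(x,\theta)$ then yields, for $\theta$ sufficiently close to $1$,
\begin{equation*}
F(1-\gep,\theta)<0 \qquad \text{and} \qquad F(1+\gep,\theta)>0.
\end{equation*}

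The first inequality shows that $1-\gep$ belongs to $\{F(\cdot,\theta)\le 0\}$, so this set is non-empty and $x_\theta$ is well-defined with $x_\theta\ge 1-\gep$. The second, combined with the fact that $F(\cdot,\theta)>0$ strictly to the right of $x_\theta$, forces $x_\theta<1+\gep$. Thus $x_\theta\in[1-\gep,\,1+\gep)$ for every $\theta$ close enough to $1$, which is precisely the claimed limit.

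There is no substantive obstacle: the whole argument is a one-step perturbation of the identity $F(1,1)=0$, carried out using the convexity of $F(\cdot,\theta)$ (to identify $x_\theta$ as the largest root) and the strict positivity of $\partial_x F$ at $(1,1)$ (to control the root under perturbation). Alternatively, the implicit function theorem applied at $(1,1)$ produces a smooth branch $\xi(\theta)$ of zeros with $\xi(1)=1$, and the same convexity argument identifies this branch with $x_\theta$ near $\theta=1$.
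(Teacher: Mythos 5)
Your proof is correct and takes essentially the same approach as the paper: both arguments verify that $1-\gep$ satisfies the defining inequality for all $\theta$ close enough to $1$ (your condition $F(1-\gep,\theta)<0$ is exactly the paper's $g_\theta(1-\gep)<1-\gep$), exploiting that $g_1$ crosses the diagonal at $x=1$ with slope $s/b>1$, i.e.\ $\partial_x F(1,1)=s-b>0$. The only cosmetic difference is the upper bound, where the paper simply invokes the already-stated fact $x_\theta<1$, while you re-derive the slightly weaker bound $x_\theta<1+\gep$ from convexity of $F(\cdot,\theta)$.
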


\begin{proof}
One just has to check that 
\begin{equation}
\lim_{\theta\rightarrow 1^{-}}g_{\theta}(1-\gep)=\frac{(1-\gep)^s+(b-1)}{b}<1-\gep,
\end{equation}
if $\gep$ is small enough.
\end{proof}

\subsection{Improved shifting method}

In this section, we prove Proposition \ref{th:upbd}, by estimating the fractional moment of $R_n$ by making a measure change on the environnement, making $\go$ lower. We simply use H\"older inequality to estimate the cost of the change of measure.
For simplicity we first write the proof for gaussian environment.

\begin{proof}[Proof of Proposition \ref{th:upbd}, Gaussian case]

Let $\gep>0$ be small (we will fix conditions on it later). We chose $\theta<1$ (close to 1) such that $x_{\theta}\ge 1-\gep$,
and some small $\eta>0$ whose value will depend on $\gep$. We consider (for $\gb\le 1$), the system of size $n=\frac{1}{\eta^2\gb^2}$ (without loss of generality, we can suppose it to be an integer) and $h=s^{-n}=\exp\left(-\frac{\ln s}{\eta^2\gb^2}\right)$. One can check that the condition $\bbE\left[\exp\left(\theta(\gb\go_1-\log M(\gb)+h)\right)\right]\le 1$ is fulfilled for $\gb$ small enough.

We define sets $V_i$ for $0\le i< n$ by
\begin{equation}
V_i=\left\{ j\in\{1,\dots,s^n-1\} \text{ such that } s^i \text{ divides } j \text{ and } s^{j+1} \text{does not divide } j \right\}  \label{eq:vi}
\end{equation}
Note that $|V_i|=(s-1)s^{n-1-i}$.

We slightly modify the measure $\bbP$ of the environment by shifting the value of $\go_j$ for $j\in V_i$, $i< n$ by $\frac{\eta s^{i/2}}{s^{n/2}\sqrt{n}}=\gd_i$ and we call $\tilde \bbP$ the modified measure. Notice that $h$ is small compared to any of the $\gd_i$.
The density of this measure is

\begin{equation}
 \frac{\dd \tilde\bbP}{\dd \bbP}(\go)=\exp\left(-\sum_{0\le i \le n-1}\sum_{j\in V_i}\left(\gd_i\go_j+\frac{\gd_i^2}{2}\right)\right).
\end{equation}
In order to estimate $u_n$ we use H\"older inequality
\begin{equation}
\bbE [ R_n^{\theta}] = \tilde\bbE\left[\frac{\dd\bbP}{\dd \tilde \bbP}R_n^{\theta} \right]\le\left( \tilde\bbE \left[\left(\frac{\dd\bbP}{\dd \tilde \bbP}\right)^{1/(1-\theta)}\right]\right)^{1-\theta} \left(\tilde \bbE\left[ R_n \right]\right)^{\theta}. \label{eq:hld}
\end{equation}
The first term is computed explicitly with the expression of the density and it is equal to
\begin{multline}
  \left( \tilde\bbE \left[\left(\frac{\dd\bbP}{\dd\tilde \bbP}\right)^ 
{1/(1-\theta)}\right]\right)^{1-\theta}\, =\,
\exp\left(\frac{\theta}{2 (1-\theta)}\sum_{i=0}^{n-1} |V_i|\gd_i^2 
\right)\, \\=
\exp\left(\frac{\theta}{2(1-\theta)}\sum_{i=0}^{n-1}(s-1)s^{n-i-1} 
\frac{\eta^2 s^i}{s^n n}\right)\, =\, \exp\left(\frac{\eta^2\theta  
(s-1)}{2(1-\theta)s}\right), \label{eq:deg}
  \end{multline}
  and we can choose $\eta$ to be such that the right-hand side is less than $1+\gep/2$.
 
Therefore in order to get an upper bound for $u_n$ we have to estimate $\tilde\bbE [R_n]$.
To do this, we write down the recursion giving $\tilde{r}_i=\tilde\bbE [R_i^{(1)}]$, for $i\le n-1$.
We have  $\tilde r_0=1$ and
\begin{equation}
 \tilde r_{i+1}\, =\, \frac{\tilde r_{i}^s\exp\left[(s-1)(-\gb\gd_{i}+h)\right]+(b-1)}{b}\label{eq:shiftrec}.
\end{equation}

Let us look at the evolution of $g_i=1-\tilde r_i\ge 0$ for $i\le n$.
We have
\begin{equation}\label{okok}\begin{split}
g_{i+1}&=\frac{1}{b}\left[1-(1-g_i)^{s}\exp\left((s-1)(-\gb\gd_{i}+h)\right)\right]\\
       &=\frac{1}{b}\left[1-(1-g_i)^{s}+(1-g_i)^{s}\left(1-\exp\left((s-1)(-\gb\gd_{i}+h)\right)\right)\right].
\end{split}\end{equation}
We can find $c_1$ such that $1-(1-g)^s\ge s(g- c_1 g^2)$ for all $0\le g\le 1$.
By choosing $\beta$ sufficiently small, the term in the exponential is small enough and $h$ is negligible compared to $\gb\gd_i$ so that when $g_i\le 2\gep$
\begin{equation} 
(1-g_i)^{s}\left(1-\exp\left((s-1)(-\gb\gd_{i}+h)\right)\right)\, \ge\, \frac{\gb\gd_i}{2}.
\end{equation}
Therefore, as long as $g_i\le 2\gep$, we have
\begin{equation} \label{eq:pii}
g_{i+1}\ge \frac{s}{b}(g_i-c_1g_i^2)+\frac{\gb\gd_i}{2b}.
\end{equation}

\begin{lemma}
If $\gep$ is chosen small enough (not depending on $\gb$), $g_{n}\ge 2\gep$.
\end{lemma}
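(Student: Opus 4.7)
The plan is to split according to whether the sequence $(g_i)_{i\le n}$ ever reaches $2\gep$. Set $\tau:=\min\{i\le n : g_i\ge 2\gep\}$, with $\tau=\infty$ if the set is empty, and treat $\tau\le n$ and $\tau=\infty$ separately.

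For $\tau\le n$ I would invoke monotonicity of the iteration. Writing $g_{i+1}=\Phi_i(g_i)$ with $\Phi_i(g)=\tfrac{1}{\sqrt s}[1-(1-g)^s\exp((s-1)(-\gb\gd_i+h))]$, the map $\Phi_i$ is strictly increasing in $g$, and since the exponential factor is $\le 1$ (because $h\le \gb\gd_i$),
\[
\Phi_i(2\gep)\;\ge\;\tfrac{1}{\sqrt s}\bigl[1-(1-2\gep)^s\bigr]\;\ge\; 2\sqrt{s}\,\gep\,\bigl(1-(s-1)\gep\bigr),
\]
which exceeds $2\gep$ as soon as $\gep\le 1/(\sqrt s(\sqrt s+1))$. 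For such $\gep$, whenever $g_i\ge 2\gep$ one also has $g_{i+1}=\Phi_i(g_i)\ge\Phi_i(2\gep)\ge 2\gep$; iterating from $i=\tau$ gives $g_n\ge 2\gep$.

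Suppose instead $\tau=\infty$. Then \eqref{eq:pii} is valid at every step, and iterating from $g_0=0$ yields
\[
g_n\;\ge\; h_n\;-\;c_1\sqrt s\sum_{j=1}^{n-1}(\sqrt s)^{n-1-j}\,g_j^2 ,
\]
where $(h_i)$ solves the affine recursion $h_{i+1}=\sqrt s\,h_i+\gb\gd_i/(2\sqrt s)$, $h_0=0$. Plugging in $\gd_i=\eta s^{i/2}/(s^{n/2}\sqrt n)$ and using $\sqrt n\,\gb\eta=1$ (which follows from $n=1/(\eta^2\gb^2)$), each of the $n$ summands in $h_n$ collapses to the constant $1/(2sn)$, giving $h_n=1/(2s)$ and more generally $h_{n-k}\le 1/(2s^{k/2+1})$. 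To control the correction I also need an upper bound on $g_j$: the inequalities $(1-g)^s\ge 1-sg$ and $1-e^{-x}\le x$ applied to the exact recursion give $g_j\le H_j:=2(s-1)h_j$, and combined with the standing assumption $g_j\le 2\gep$ one has $g_j\le\min(2\gep,\,2(s-1)h_j)$. Splitting the sum at the index $k^*$ defined by $H_{n-k^*}\simeq 2\gep$, i.e.\ $s^{k^*/2}\simeq (s-1)/(2s\gep)$, and summing the two resulting geometric series, both partial sums are $O(\gep)$ with an $s$-dependent constant. Hence $g_n\ge 1/(2s)-C(s)\gep$, which for $\gep$ small enough (depending only on $s$) strictly exceeds $2\gep$, contradicting $\tau=\infty$.

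The main obstacle is precisely this quadratic correction. The weights $(\sqrt s)^{n-1-j}$ grow geometrically in $n-j$, so the naive uniform bound $g_j\le 2\gep$ would make the sum blow up with $n$; the point that saves the argument is that in the ``bulk'' $j\ll n$ one actually has $g_j\lesssim s^{-(n-j)/2}$, so only the last $O(\log(1/\gep))$ terms contribute substantially and their combined weight stays linear in $\gep$ uniformly in $n$.
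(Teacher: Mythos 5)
Your argument is correct, and it reaches the conclusion by a route that shares the paper's quantitative core but organizes the two technical difficulties differently. The common core is: iterate the linearized bound \eqref{eq:pii}, observe that each weighted drift term $(\sqrt s)^{n-1-i}\gb\gd_i/(2\sqrt s)$ equals the constant $1/(2sn)$ (because $\gb\eta\sqrt n=1$), so the drift alone produces a macroscopic value of order $1/s$ at level $n$, and then check that the quadratic correction cannot eat this up because $g_j$ is exponentially small in $n-j$ in the bulk. The paper handles the two delicate points (legitimacy of \eqref{eq:pii} at every step, and decay of $g_j$) in one stroke: assuming $g_n\le 2\gep$, it propagates smallness backwards via $g_{i-1}\le s^{-1/4}g_i$ from \eqref{okok}, getting $g_k\le 2\gep s^{(k-n)/4}$, and then runs a multiplicative bookkeeping with $q_i=s^{(n-i)/2}g_i$ and the product $\prod(1-c_1g_k)\ge 1/2$. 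You instead split at the first passage of $g$ above $2\gep$: if it happens before $n$, monotonicity of the one-step map together with $\Phi_i(2\gep)\ge 2\gep$ (valid since the exponential factor is $\le 1$) keeps $g$ above $2\gep$ up to $n$; if it never happens, \eqref{eq:pii} is available at every step by definition of the case, and you control the additive correction $\sum_j s^{(n-j)/2}g_j^2$ through the \emph{unconditional} comparison $g_j\le 2(s-1)h_j\le \frac{s-1}{s}s^{-(n-j)/2}$ with the affine majorizing recursion, combined with $g_j\le 2\gep$ near $j=n$; splitting the sum at the crossover index is exactly what is needed, since the unconditional bound alone would only give an $O(1)$ correction while the mixed bound gives $O(\gep)$, uniformly in $n$. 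Two points worth making explicit in a final write-up: the inequality $h\le\gb\gd_i$, which you use both for the factor $\le 1$ in $\Phi_i$ and for the upper comparison, amounts to $n\le s^{n/2}$ (since $h=s^{-n}$ and $\gb\gd_0=1/(n s^{n/2})$), hence holds only for $\gb$ small, which is the same smallness the paper invokes; and \eqref{eq:pii} itself is stated under $g_i\le 2\gep$ and $\gb$ small, which your case $\tau=\infty$ indeed provides. What your variant buys is a decay estimate for $g_j$ that does not depend on any assumption about $g_n$ and avoids the backward induction; what the paper's buys is brevity, extracting both the smallness and the decay from a single backward sweep.
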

\begin{proof}
\rev{Suppose that $g_n\le 2\gep$. Equation \eqref{okok} implies that
\begin{equation}
 g_i\ge \frac{1}{b}[1-(1-g_{i-1})^s], \quad \forall i\le n.
\end{equation}
This is equivalent to
\begin{equation}
 g_{i-1}\le 1-(1-b g_i)^{1/s},
\end{equation}
so that if $g_i\le 2\gep$ and $\gep$ is small enough,
\begin{equation}
 g_{i-1}\le s^{-1/4}g_{i}.
\end{equation}
Using the above equation inductively from $i=n$ to $k+1$, one gets 

\begin{equation}\label{s1/4}
g_k\le 2\gep s^{(k-n)/4}, \quad \forall k\le n.
\end{equation}

\rev{We write $q_i=s^{(n-i)/2}g_i=\frac{s^{n-i}}{b^{n-i}}g_i$, note that $q_0=0$.
When $g_i\le 2\gep$, from \eqref{eq:pii} and the definition of $q_i$, we have
\begin{equation}
q_{i+1}\ge q_i(1-c_1 g_i)+\frac{\gb\eta}{2b\sqrt{n}}.
\end{equation}
Using the fact that $g_k\le 2\gep$ for all $k\le n$, and the above inequality for $1\le k\le n-1$ we get that
\begin{equation}
 q_n\ge \frac{n\gb\eta}{2b\sqrt{n}}\prod_{k=0}^{n-1} (1-c_1 g_k).
\end{equation}
Now to we use \eqref{s1/4} to get that
  \begin{equation}
  \prod_{k=0}^{n-1} \left(1-c_1 g_k\right)\ge \prod_{k=0}^{n-1} \left(1-c_1 2\gep s^{(k-n)/4}\right)\ge \prod_{i=1}^{\infty}\left(1-c_1 2\gep s^{-i/4}\right) \ge 1/2,
  \end{equation}
where the last inequality holds if $\gep$ is chosen small enough.}
Hence
 
\begin{equation}
 g_n=q_n \ge \frac{n\gb\eta}{2b\sqrt{n}}\prod_{k=0}^{n-1} (1-c_1 g_k)\ge \frac{\gb\eta\sqrt{n}}{4b}=\frac{1}{4b},
\end{equation}
and from that we infer that $g_n\ge 2\gep$ if $\gep<1/(8b)$.}
\end{proof}

We just proved that $\tilde r_n \le 1-2\gep$. Using \eqref{eq:hld}, and the bound we have on \eqref{eq:deg} we get
\begin{equation}
u_n\le (1-2\gep)^{\theta}(1+\gep/2)\le 1-\gep\le x_\theta.
\end{equation}
The last inequality is just comes from our choice for $\theta$, the second inequality is true if $\gep$ is small, and $\theta$ sufficiently close to one. The result follows from Lemma \ref{th:fracmom}
\end{proof}

\begin{proof}[Proof of Proposition \ref{th:upbd}, general non-gaussian case]

When the environment is\\ non--gaussian,
one can generalize the preceding proof by making a tilt on the measure instead of a shift. This means that the change of measure becomes
\begin{equation}
\frac{\dd \tilde \bbP}{\dd \bbP}(\go)=\exp\left(-\sum_{0\le i \le n-1}\sum_{j\in V_i}\left(\gd_i\go_j+\log M(-\gd_i)\right)\right).
\end{equation}

Therefore the term giving the cost of the change of measure (cf. \eqref{eq:hld}) is

\begin{equation} \begin{split}
 \left( \tilde\bbE \left[\left(\frac{\dd\bbP}{\dd\tilde \bbP}\right)^{1/(1-\theta)}\right]\right)^{1-\theta}&=
\exp\left((1-\theta)\sum_{i=0}^{n-1} |V_i|\left[\log M\left(\frac{\theta}{1-\theta}\gd_i\right)+\frac{\theta}{1-\theta}\log M(-\gd_i)\right]\right)\\
&\le \exp\left(\frac{\theta}{(1-\theta)}\sum_{i=0}^{n-1}(s-1)s^{n-i-1}\frac{\eta^2 s^i}{s^n n}\right)=\exp\left(\frac{\eta^2\theta (s-1)}{(1-\theta)s}\right).
 \end{split}\end{equation}

Where the inequality is obtained by using that $\log M(\gb)\sim \gb^2/2$, so that for $x$ small enough,
$\log M (x)\le x^2$.

The second point where we have to look is the estimate of $\tilde r_n$. In fact, the term $\exp((s-1)(-\gb\gd_i+h))$ should be replaced by
\begin{equation}
\exp\left[(s-1)\left(\log M(\gb-\gd_i)-\log M(\gb)-\log M(-\gd_i)+h\right)\right].
\end{equation}
Knowing the behavior of the convex function $\log M(\cdot)$ near zero, it is not difficult to see that this is less than $\exp(-c_6\gb\gd_i+(s-1)h)$ for some constant $c_6$, which is enough for the rest of the computations.
\end{proof}

\section*{Acknowledgments}
The author is very grateful to Giambattista Giacomin for suggesting to work on this subject, for enlightening discussion and many useful comments concerning the manuscript. The comments and the careful reviewing of an anonymous referee have been of great help too.
The author also acknowledges the support of ANR, grant POLINTBIO.

\end{document}